\documentclass[a4paper,reqno, 11pt]{amsart}
\usepackage{amsmath,amssymb,amsthm,enumerate}

\usepackage{paralist}
\usepackage{graphics}
\usepackage{epsfig}
\usepackage{amsfonts}
\usepackage{amssymb}
\usepackage{color}

\usepackage{mathtools}

\usepackage[numbers,sort&compress]{natbib}

\usepackage{mathrsfs}
\usepackage{lineno}

\usepackage[svgnames]{xcolor}

\usepackage{pifont}
\usepackage{mathtools}

\usepackage[misc]{ifsym}

\usepackage[margin=1in]{geometry}


\def\ifl{\iffalse }

\def\bc{\begin{center}} \def\ec{\end{center}}
\def\ba{\begin{array}} \def\ea{\end{array}}
\def\bea{\begin{eqnarray}} \def\eea{\end{eqnarray}}
\def\beaa{\begin{eqnarray*}} \def\eeaa{\end{eqnarray*}}




\newtheorem{thm}{Theorem}[section]

\newtheorem{lem}{Lemma}[section]
\theoremstyle{remark}
\newtheorem{rem}{Remark}[section]
\newtheorem*{rem*}{Remark}
\newtheorem{cor}{Corollary}[section]

\numberwithin{equation}{section}

\title{On fractional smoothness of modulus of functions}

\author[D. Li]{ Dong Li}
\address{D. Li, Department of Mathematics, the Hong Kong University of Science \& Technology, Clear Water Bay, Kowloon, Hong Kong}
\email{madli@ust.hk}


\begin{document}

\begin{abstract}
We consider the Nemytskii operators $u\to |u|$ and $u\to u^{\pm}$ in a bounded domain
$\Omega$ with $C^2$ boundary. We give elementary proofs of the boundedness  in 
$H^s(\Omega)$ with $0\le s<3/2$.
\end{abstract}

\maketitle

\section{introduction}
Let $\Omega$ be a nonempty open bounded set in $\mathbb R^d$. 
For $0<\gamma<1$ and $f \in C^1(\Omega)$, define the nonlocal $\dot H^{\gamma}$
semi-norm as
\begin{align}
\| f \|_{\dot H^{\gamma}(\Omega)}^2
= \int_{x, y \in \Omega, x \ne y}
\frac { |f(x) - f(y)|^2} {|x-y|^{d+2\gamma} } dx dy.
\end{align}
For $f \in C^2(\Omega)$ and $0<\gamma<1$, we define 
\begin{align}
\| f \|_{H^{1+\gamma} (\Omega)} =
\| f \|_{H^1(\Omega)} + \| \partial f   \|_{\dot H^{\gamma} (\Omega)},
\end{align}
where (and throughout this note) $\partial f = (\partial_{x_1} f, \cdots, \partial_{x_n} f )$
denotes the usual gradient.  Throughout this note we shall only be concerned with real-valued
functions, however with some additional work the results can be generalized to complex-valued functions.  Define the Nemytskii operators 
\begin{align}
T_1 u =|u|, \quad T_2 u = u^+ = \max\{u,0\}, \quad T_3 u = u^-=\max\{-u,0\}.
\end{align}
The purpose of this note is to give an elementary proof of  the following.

\begin{thm}[Boundedness  in $H^{\frac 32-}(\Omega)$] \label{thmMa}
Let $d\ge 1$ and $0\le s <\frac 32$.  Assume $\Omega$ is a nonempty open bounded set in $\mathbb R^d$
with $C^2$ boundary, i.e.   locally it can be written as the graph of a $C^2$ function on
$\mathbb R^{d-1}$. Then $T_i$, $i=1,2,3$ are bounded  on $H^s(\Omega)$. More 
precisely,
\begin{align}
\sum_{i=1}^3 \| T_i u \|_{H^s (\Omega)} \le \alpha_1 \| u \|_{H^s(\Omega)},
\end{align}
where $\alpha_1>0$ depends on ($s$, $\Omega$, $d$).
\end{thm}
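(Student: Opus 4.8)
The plan is to isolate one model operator, dispose of the elementary range $0\le s\le 1$, and then treat $1<s<\tfrac32$ through a commutator‑type decomposition of $\nabla|u|$, with all the difficulty concentrated in a single interface integral.

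First I would reduce to $T_1u=|u|$: since $u^{+}=\tfrac12(|u|+u)$, $u^{-}=\tfrac12(|u|-u)$, and $u\mapsto u$ is trivially bounded on $H^s(\Omega)$, a bound for $T_1$ yields bounds for $T_2,T_3$. Next, since restriction commutes with the $T_i$ and can only decrease the intrinsic norms defining $H^s(\Omega)$, the $C^2$ hypothesis lets one fix a bounded extension $E\colon H^s(\Omega)\to H^s(\mathbb R^d)$ (available for $s<2$): then $\|T_iu\|_{H^s(\Omega)}\le\|T_i(Eu)\|_{H^s(\mathbb R^d)}$, so it is enough to prove the bound on $\mathbb R^d$. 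The range $0\le s\le1$ is then soft: for $s=0$, $\||u|\|_{L^2}=\|u\|_{L^2}$; for $0<s<1$, the reverse triangle inequality $\big||u(x)|-|u(y)|\big|\le|u(x)-u(y)|$ makes the Gagliardo seminorm of $|u|$ at most that of $u$; and for $s=1$, the a.e.\ chain rule $\nabla|u|=\sigma\nabla u$ (with $\sigma:=\operatorname{sgn}u$), valid for $u\in W^{1,1}_{\mathrm{loc}}$, together with $\nabla u=0$ a.e.\ on $\{u=0\}$, gives $\|\nabla|u|\|_{L^2}\le\|\nabla u\|_{L^2}$.

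For $1<s<\tfrac32$ set $s=1+\gamma$, $0<\gamma<\tfrac12$. Using $\nabla|u|=\sigma\nabla u$ a.e., it remains to estimate $\|\sigma\nabla u\|_{\dot H^{\gamma}}$; I would split, for $x\ne y$,
\[
\sigma(x)\nabla u(x)-\sigma(y)\nabla u(y)=\sigma(x)\big(\nabla u(x)-\nabla u(y)\big)+\big(\sigma(x)-\sigma(y)\big)\nabla u(y).
\]
As $|\sigma|\le1$, the first term contributes at most $\|\nabla u\|_{\dot H^{\gamma}}^2\le\|u\|_{H^{1+\gamma}}^2$, leaving the interface term
\[
J:=\int\!\!\int\frac{|\sigma(x)-\sigma(y)|^{2}\,|\nabla u(y)|^{2}}{|x-y|^{d+2\gamma}}\,dx\,dy .
\]
The key elementary point is that $\sigma(x)\ne\sigma(y)$ forces $|u(x)-u(y)|\ge|u(y)|$: either $u(x)u(y)<0$, so $|u(x)-u(y)|=|u(x)|+|u(y)|$, or $u(x)=0$; and when $u(y)=0$ the integrand vanishes since $\nabla u=0$ a.e.\ on $\{u=0\}$. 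Hence $J\le\int|\nabla u(y)|^{2}\kappa(y)\,dy$ where $\kappa(y):=\int_{\{x:\,|u(x)-u(y)|\ge|u(y)|\}}|x-y|^{-d-2\gamma}\,dx$ is a measure‑theoretic ``good‑set'' kernel.

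It then remains to prove $\int|\nabla u(y)|^{2}\kappa(y)\,dy\lesssim\|u\|_{H^{1+\gamma}}^{2}$. Cutting $\kappa$ at a fixed radius handles the far part ($\int_{|x-y|\gtrsim1}|x-y|^{-d-2\gamma}\,dx$ is bounded), and for the near part one decomposes dyadically in $|x-y|$ and estimates $\big|\{x\in B_\rho(y):|u(x)-u(y)|\ge|u(y)|\}\big|$ by Chebyshev after bounding local oscillations of $u$ via $\nabla u\in H^{\gamma}\hookrightarrow L^{2d/(d-2\gamma)}$. The restriction $\gamma<\tfrac12$ is exactly what lets the exponent bookkeeping close: it is the threshold for $\int\!\!\int_{\{x_1y_1<0\}}|x-y|^{-d-2\gamma}\,dx\,dy<\infty$ over bounded sets, the model case $u(x)=x_1$, $|u|=|x_1|$, where $\nabla|u|=\operatorname{sgn}(x_1)$ jumps across $\{x_1=0\}$ and just barely lies in $H^{\gamma}$ — this is also why the threshold is $s<\tfrac32$ and not more. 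I expect the genuine obstacle to be making this last estimate work uniformly in $d$: for $d\le2$ one has $H^{1+\gamma}\hookrightarrow C^{0,\alpha}$ and may control $|u(x)-u(y)|$ pointwise, but for $d\ge3$ neither $u$ nor $\nabla u$ is bounded, so pointwise values must be replaced throughout by local averages and Sobolev--Poincaré inequalities (equivalently, by a second‑difference/Littlewood--Paley formulation of $\|\cdot\|_{\dot H^{\gamma}}$), and one must exploit the fractional smoothness of $\nabla u$, not merely its integrability. By comparison, the reductions above, the cases $s\le1$, the treatment of $T_2,T_3$, and the boundary flattening are routine.
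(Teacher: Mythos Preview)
Your reductions (to a single operator, to $\mathbb R^d$ via a $C^2$ extension, the cases $s\le1$) and your splitting of $\sigma\,\nabla u$ into a diagonal piece plus an interface term $J$ are correct and coincide with the paper: it too reduces everything to bounding
\[
J=\int_{\{u(x)>0\}}\int_{\{u(y)\le0\}}\frac{|\partial u(x)|^{2}}{|x-y|^{d+2\gamma}}\,dy\,dx .
\]
The gap is in your treatment of $J$. The observation $\sigma(x)\ne\sigma(y)\Rightarrow|u(x)-u(y)|\ge|u(y)|$ is valid, but the Chebyshev/Sobolev--Poincar\'e scheme you sketch does not close: carrying it through (dyadic in $\rho$, Chebyshev with the Poincar\'e bound $\int_{B_\rho}|u-u(y)|^2\lesssim\rho^{2}\int_{B_\rho}|\nabla u|^2$, then optimizing against the trivial bound $|B_\rho|$) yields at best a quantity of the form $\int|\nabla u(y)|^{2}\,|u(y)|^{-2\gamma}\bigl(M|\nabla u|^{2}(y)\bigr)^{\gamma}dy$, and no inequality bounds such a singularly weighted integral by $\|u\|_{H^{1+\gamma}}^{2}$ using only integrability of $\nabla u$. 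You concede this (``the genuine obstacle\dots one must exploit the fractional smoothness of $\nabla u$''), but the proposal never supplies the mechanism that brings $\|\nabla u\|_{\dot H^{\gamma}}$ back in; as written the argument is incomplete already in dimension one, not only for $d\ge3$.

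The paper's mechanism is quite different and sidesteps the difficulty. First it reduces $\mathbb R^d$ to $\mathbb R$ by slicing, using $\|f\|_{H^{1+s}(\mathbb R^d)}\lesssim\sum_j\|f\|_{L^2_{x'_j}H^{1+s}_{x_j}}$. In one dimension, write $\{u>0\}=\bigsqcup_jI_j$ with $I_j=(a_j,b_j)$ and $u(a_j)=u(b_j)=0$; for $x\in I_j$ one has $\{u\le0\}\subset\mathbb R\setminus I_j$, hence $J\lesssim\sum_j\int_{I_j}(u'(x))^{2}\operatorname{dist}(x,I_j^c)^{-2\gamma}dx$. The key idea---absent from your plan---is that vanishing of $u$ at both endpoints forces $\int_{I_j}u'=0$, and the paper proves (via a ground-state substitution $w(x)=x^{-\delta}$) a fractional Hardy inequality valid precisely under this mean-zero hypothesis:
\[
\int_{I}\frac{f(x)^{2}}{\operatorname{dist}(x,I^c)^{2\gamma}}\,dx\ \lesssim\ \int_{I\times I}\frac{|f(x)-f(y)|^{2}}{|x-y|^{1+2\gamma}}\,dx\,dy,\qquad \int_I f=0.
\]
Applying this with $f=u'$ on each $I_j$ and summing gives $J\lesssim\|u'\|_{\dot H^{\gamma}(\mathbb R)}^{2}$ immediately. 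This cancellation of $u'$ on nodal intervals is the structural fact that converts the interface weight into the Gagliardo seminorm of $\partial u$; it is exactly the ingredient your Chebyshev route would have to reproduce, and it is not recoverable from the level-set inequality $|u(x)-u(y)|\ge|u(y)|$ alone.
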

\begin{rem}
In Theorem \ref{thmMa}, the case $0<s<1$ is trivial thanks to the simple inequality
$||x|-|y||\le |x-y|$ for any $x, y \in \mathbb R$. The case $s=1$ corresponds to
the well-known distributional calculation $\partial (|u|) =  \mathrm{sgn}(u)
\partial u $ for $u\in H^1$.  Thus only the case $1<s<\frac 32$ requires some work.
The obstruction $s<\frac 32$ is clear since there are jump discontinuities of the gradient
along manifolds of dimension $d-1$. 
In 1D  one can take a smooth compactly supported
 function $\phi$ such that $\phi(x) \equiv x$ for $x$ near the origin. It is trivial
 to verify that $|\phi| \notin H^{\frac 32}$. 
\end{rem}
\begin{rem}
It follows from our proof that for $1<s<\frac 32$,
 $T_i$ maps bounded sets in $H^s(\Omega)$ to pre-compact sets in $H^1(\Omega)$. 
 This fact has important applications in the convergence of approximating solutions
 in some nonlinear PDE problems. 
 \end{rem}
There is by now an enormous body of literature on extension, composition, regularity and stability of nonlocal operators and we shall not give a survey on the state of art in this short note.
For $C^{\infty}$ boundary $\partial \Omega$, one can use interpolation to define
the fractional spaces $H^s(\Omega)$ which can be regarded as restrictions of functions in
$H^s(\mathbb R^n)$ (cf. \cite{LM72}). In \cite{Mey91} Bourdaud and Meyer proved the boundedness
of  $T_1$  in Besov spaces $B^s_{p,p}(\mathbb R^d)$, $0<s<1+\frac 1p$, $1\le p\le \infty$.
By using linear spline approximation theory, Oswald \cite{Os92} showed that $T_1$ is bounded in $B^s_{p,q}(\mathbb R)$, $1\le p,q\le \infty$ if and only if $0<s<1+\frac 1p$.
In \cite{S95}, Savar\'e showed the regularity of $T_2$ in the space 
$BH(\Omega)=\{u \in W^{1,1}(\Omega): \, \text{$D^2 u$ is a matrix-valued 
bounded measure}\}$, i.e. $\nabla u \in \mathrm{BV}(\Omega;\mathbb R^d)$. 
One should note that in general $T_1$ increases the $H^s$ norm for $1<s<\frac 32$. For example,
for $1<s<\frac 32$,  R. Musina and A.I. Nazarov \cite{MN17} showed that if $u
\in H^s(\mathbb R^d)$ changes
sign, then
\begin{align}
\langle (-\Delta_{\mathbb R^d} )^s |u|, |u| \rangle > 
\langle (-\Delta_{\mathbb R^d} )^s u, u\rangle.
\end{align}
We refer to \cite{BS11,MN17} and the references therein for a more detailed survey of composition
operators in function spaces with various fractional order of smoothness.

The rest of this note is organized as follows. In Section 2 we give the proof for the one dimensional
case. In Section 3 we give the details for general dimensions $d\ge 2$.

\subsection*{Notation}
For any two quantities $A$, $B\ge 0$, we write  $A\lesssim B$ if $A\le CB$ for some unimportant constant $C>0$. 
We write $A\sim B$ if $A\lesssim B$ and $B\lesssim A$. 

\subsection*{Acknowledgement.}
D. Li is supported in part by Hong Kong RGC grant GRF 16307317 and 16309518.
The author is indebted to Xiaoming Wang for raising this intriguing question.

\section{The one dimensional case}

\begin{lem} \label{lem_H1}
Let $0<\alpha<1$ and $0<\delta<1-\alpha$. Consider
\begin{align}
F(k) = \int_0^{\infty} 
\frac {1-y^{-\delta}} {(k+|1-y|^2)^{\frac{1+\alpha}2} } dy, \quad k>0.
\end{align}
Then $F(k)$ is uniformly bounded and 
\begin{align}
\lim_{k\to 0} F(k) =F(0) >0.
\end{align}
\end{lem}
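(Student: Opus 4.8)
The plan is to prove the three assertions---uniform boundedness, convergence as $k\to 0^+$, and strict positivity of $F(0)$---separately, with only the positivity requiring a genuine idea.

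\emph{Uniform boundedness and the limit.} Since $k+|1-y|^2\ge |1-y|^2$, we have the $k$-independent pointwise bound
\begin{align}
\Bigl| \frac{1-y^{-\delta}}{(k+|1-y|^2)^{\frac{1+\alpha}2}} \Bigr| \le \frac{|1-y^{-\delta}|}{|1-y|^{1+\alpha}} =: G(y), \qquad k\ge 0.
\end{align}
I would then check $G\in L^1(0,\infty)$ by inspecting the three critical regions: near $y=1$ one has $|1-y^{-\delta}|\sim \delta|y-1|$, so $G(y)\sim \delta|1-y|^{-\alpha}$, integrable because $\alpha<1$; near $y=0$, $G(y)\sim y^{-\delta}$, integrable because $\delta<1$; and near $y=\infty$, $G(y)\sim y^{-1-\alpha}$, integrable because $\alpha>0$. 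Hence $\sup_{k\ge 0}|F(k)|\le \int_0^\infty G<\infty$, which is the uniform boundedness, and in particular $F(0)$ is an absolutely convergent integral. Since for each fixed $y\ne 1$ the integrand converges to $\tfrac{1-y^{-\delta}}{|1-y|^{1+\alpha}}$ as $k\to 0^+$, dominated convergence with majorant $G$ gives $\lim_{k\to 0^+}F(k)=F(0)$.

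\emph{Positivity of $F(0)$.} Split $F(0)=\int_0^1+\int_1^\infty$ and in the first piece substitute $y=1/t$; using $|1-1/t|=(t-1)/t$ for $t>1$ one gets
\begin{align}
\int_0^1 \frac{1-y^{-\delta}}{(1-y)^{1+\alpha}}\,dy = \int_1^\infty \frac{(1-t^\delta)\,t^{\alpha-1}}{(t-1)^{1+\alpha}}\,dt,
\end{align}
and therefore
\begin{align}
F(0) = \int_1^\infty \frac{\bigl(t^{\alpha-1}-t^{\alpha-1+\delta}\bigr)+\bigl(1-t^{-\delta}\bigr)}{(t-1)^{1+\alpha}}\,dt = \int_1^\infty \frac{(1-t^\delta)\,(t^{\alpha-1}-t^{-\delta})}{(t-1)^{1+\alpha}}\,dt.
\end{align}
For $t>1$ the factor $1-t^\delta$ is negative since $\delta>0$, and the factor $t^{\alpha-1}-t^{-\delta}$ is negative since the hypothesis $\delta<1-\alpha$ forces $\alpha-1<-\delta$; hence the numerator is strictly positive on $(1,\infty)$ and the denominator is positive, so $F(0)>0$. (The endpoint analysis of this last integrand, $(t-1)^{1-\alpha}$-behaviour near $t=1$ and $t^{-1-\alpha}$-behaviour near $\infty$, confirms convergence.)

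The main obstacle is the positivity: a direct attack fails because $1-y^{-\delta}$ changes sign at $y=1$ and the contributions of $(0,1)$ and $(1,\infty)$ largely cancel, so the integral is not manifestly positive. The reflection $y\mapsto 1/y$ is precisely what exposes this cancellation and produces the clean factorization of the numerator, and it is there that the exact range $\delta<1-\alpha$ is used. (Equivalently, $y=e^u$ turns $F(0)$ into a constant multiple of $\int_{\mathbb{R}}\bigl(\cosh(au)-\cosh(bu)\bigr)|\sinh(u/2)|^{-1-\alpha}\,du$ with $a=(1-\alpha)/2$, $b=a-\delta$, where $|b|<a$ by the same hypothesis, again making positivity evident.)
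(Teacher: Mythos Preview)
Your proof is correct and follows essentially the same route as the paper: dominated convergence for the limit, and the inversion $y\leftrightarrow 1/y$ to fold the integral onto a half-line and factor the numerator, so that the sign is determined by the hypothesis $\delta<1-\alpha$. The only cosmetic difference is that the paper maps $(1,\infty)$ back to $(0,1)$ and writes $F(0)=\int_0^1 \frac{(1-y^{-\delta})(1-y^{\alpha+\delta-1})}{|1-y|^{1+\alpha}}\,dy$, which is just your formula under $t=1/y$; your write-up is in fact more explicit about the majorant $G$ and the endpoint checks than the paper's terse proof.
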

\begin{proof}
By using Lebesgue Dominated Convergence we have $\lim_{k\to 0}F(k)=F(0)$.  
Note that 
\begin{align}
F(0) = \int_0^1 \frac{ (1-y^{-\delta} ) (1-y^{\alpha+\delta-1})} {|1-y|^{1+\alpha}} dy>0.
\end{align}

\end{proof}

\begin{lem} \label{lem_H2}
Let $0<\alpha<1$. Assume $u$ is bounded on $[0,\infty)$ and $|u(x)| \lesssim x^{-2}$ for
$x\ge 1$. Then
\begin{align}
\int_0^{\infty} \int_0^{\infty}
\frac{ (u(x)-u(y))^2} {|x-y|^{1+\alpha}} dx dy
\gtrsim \int_0^{\infty} \frac{u(x)^2}{x^{\alpha}} dx.
\end{align}
\end{lem}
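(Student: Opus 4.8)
The statement is the one-dimensional fractional Hardy inequality, and the plan is to derive it from an elementary pointwise identity combined with the scaling computation packaged in Lemma \ref{lem_H1}. Fix $\delta\in(0,1-\alpha)$ and set $\omega(x)=x^{-\delta}$ and $v(x)=x^{\delta}u(x)$, so that $u=\omega v$ with $v$ bounded and $|v(x)|\lesssim x^{-2+\delta}$ for $x\ge 1$. The key observation is the algebraic identity
\begin{align}
(u(x)-u(y))^2=v(x)^2\omega(x)\bigl(\omega(x)-\omega(y)\bigr)+v(y)^2\omega(y)\bigl(\omega(y)-\omega(x)\bigr)+\omega(x)\omega(y)\bigl(v(x)-v(y)\bigr)^2,
\end{align}
verified by substituting $u=\omega v$ and expanding. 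Since $\omega>0$, the last term is nonnegative, so it may be discarded to obtain a pointwise lower bound for the integrand.

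To avoid convergence technicalities I would first prove the inequality with the kernel $|x-y|^{1+\alpha}$ replaced by $(k+|x-y|^2)^{(1+\alpha)/2}$ for $k>0$; since this only shrinks the left-hand side, the original inequality follows afterwards by letting $k\downarrow 0$. Dividing the displayed identity by $(k+|x-y|^2)^{(1+\alpha)/2}$, integrating over $(x,y)\in(0,\infty)^2$, and using the $x\leftrightarrow y$ symmetry of the first two terms gives
\begin{align}
\int_0^\infty\!\!\int_0^\infty\frac{(u(x)-u(y))^2}{(k+|x-y|^2)^{\frac{1+\alpha}2}}\,dx\,dy\ \ge\ 2\int_0^\infty v(x)^2\omega(x)\left(\int_0^\infty\frac{\omega(x)-\omega(y)}{(k+|x-y|^2)^{\frac{1+\alpha}2}}\,dy\right)dx.
\end{align}
The inner integral is evaluated by the substitution $y=xt$: it equals $x^{-\delta-\alpha}F(k/x^2)$, with $F$ exactly the function of Lemma \ref{lem_H1}. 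Since $v(x)^2\omega(x)x^{-\delta-\alpha}=u(x)^2x^{-\alpha}$, the right-hand side is $2\int_0^\infty u(x)^2x^{-\alpha}F(k/x^2)\,dx$.

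Finally let $k\to 0$. The left-hand side increases to the quantity appearing in the lemma, while by Lemma \ref{lem_H1} ($F$ uniformly bounded, $F(k/x^2)\to F(0)>0$ pointwise) and Fatou's lemma the right-hand side is bounded below by $2F(0)\int_0^\infty u(x)^2x^{-\alpha}\,dx$. This yields the claimed estimate with implied constant $2F(0)$, and it stays meaningful (both sides $=\infty$) when $\int_0^\infty u^2x^{-\alpha}=\infty$. The remaining work is routine bookkeeping: checking the pointwise identity, the scaling computation for the inner integral, and the finiteness of the double integral on the right for fixed $k>0$ (so Fubini applies) — this is where the hypotheses are used, boundedness of $u$ controlling $v^2\omega$ near $0$, the decay $|u(x)|\lesssim x^{-2}$ controlling it near $\infty$, and $\delta<1$, $\alpha<1$ making the $y$-integral of $|\omega(x)-\omega(y)|(k+|x-y|^2)^{-(1+\alpha)/2}$ converge. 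I do not anticipate a genuine obstacle; the only place real content enters is the positivity $F(0)>0$, which Lemma \ref{lem_H1} already provides.
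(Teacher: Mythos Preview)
Your proof is correct and follows essentially the same route as the paper: both use the super-harmonic/ground-state substitution $w(x)=x^{-\delta}$ to obtain the pointwise lower bound $(u(x)-u(y))^2\ge u(x)^2\,\frac{w(x)-w(y)}{w(x)}+u(y)^2\,\frac{w(y)-w(x)}{w(y)}$ (your exact identity with the nonnegative term dropped is precisely this inequality), regularize the kernel, reduce the inner integral by scaling to the function $F$ of Lemma~\ref{lem_H1}, and pass to the limit. One small cleanup: under the stated hypotheses $\int_0^\infty u(x)^2 x^{-\alpha}\,dx<\infty$ automatically, so dominated convergence (using the uniform bound on $F$) is the right tool on the right-hand side rather than Fatou, which would require knowing $F\ge 0$.
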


\begin{proof}
We use by now the standard super-harmonic approach (cf. \cite{BD11, LRNew}).
Observe for $w>0$, 
\begin{align}
(u(x)-u(y))^2 \ge u^2(x) \frac{w(x)-w(y)}{w(x)} + u^2(y) \frac{w(y)-w(x)}{w(y)}.
\end{align}
For $x>0$, take $w(x) = x^{-\delta}$ with $0<\delta<1-\alpha$. By Lemma \ref{lem_H1},
it is not difficult to check that
\begin{align}
\sup_{x>0, \epsilon>0} \frac {x^{\alpha}} {w(x)}
\Bigl| \int_0^{\infty} \frac {w(x) -w(y)} { (\epsilon^2 + |x-y|^2)^{\frac {1+\alpha}2} }
dy
\Bigr| \lesssim 1.
\end{align}
Thus
\begin{align}
\int \frac{(u(x)-u(y))^2}{(\epsilon^2 +|x-y|^2)^{\frac{1+\alpha}2} }
dx dy \ge 2 \int u(x)^2 \frac 1{w(x)} \int \frac {w(x)-w(y)} {
(\epsilon^2+|x-y|^2)^{\frac {1+\alpha}2} } dy dx.
\end{align}
Sending $\epsilon \to 0$ then yields the result.
\end{proof}

\begin{lem} \label{lem_H3}
Let $0<\alpha<1$. Assume $u$ is bounded on $[0,1]$. 
Suppose $\int_0^1 u(x) dx =0$. Then 
\begin{enumerate}
\item $\int_0^1 \int_0^1 (u(x) -u(y))^2 dx dy = 2 \| u\|_2^2 $. 
\item $\int_0^1 \int_0^1 \frac {(u(x) -u(y))^2}{
|x-y|^{1+\alpha} } dx dy
\gtrsim \int_0^1 \frac {u(x)^2} { \min\{ x^{\alpha},  (1-x)^{\alpha}\} } dx.$
\end{enumerate}
More generally, there are constants $\beta_1>0$, $\beta_2>0$ depending only on $\alpha$, such that
for any finite interval on $\mathbb R$, it holds that (below $|I|$ denote the length
of the interval)
\begin{align} \label{ineqI}
\int_{x\ne y \in I}
\frac {|f(x)-f(y)|^2}{|x-y|^{1+\alpha}} dx dy
+ \beta_1 |I|^{-\alpha} \| f\|_{L^2(I)}^2
\ge \beta_2 \int_{I}
\frac {f(x)^2}{\mathrm{dist}(x, I^c)^{\alpha} } dx.
\end{align}
If $\int_I f  dx =0$, then we have
\begin{align}
\int_{x\ne y \in I}
\frac {|f(x)-f(y)|^2}{|x-y|^{1+\alpha}} dx dy
\ge \beta_3 \int_{I}
\frac {f(x)^2}{\mathrm{dist}(x, I^c)^{\alpha} } dx, \notag 
\end{align}
where $\beta_3>0$ depends only on $\alpha$.
\end{lem}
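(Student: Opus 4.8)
The plan is to establish the three displayed statements essentially in order, reducing the later ones to the earlier ones by scaling and translation. For part (1), I would simply expand the square: $\int_0^1\int_0^1 (u(x)-u(y))^2\,dx\,dy = 2\|u\|_2^2 - 2(\int_0^1 u)^2$, and the hypothesis $\int_0^1 u = 0$ kills the cross term. For part (2), the idea is to split the interval $[0,1]$ into its left half $[0,\tfrac12]$ and right half $[\tfrac12,1]$, and to control the contribution of each half separately. On the left half, the double integral over $[0,\tfrac12]^2$ together with a cheap lower-order term can be compared to $\int_0^{1/2} u(x)^2 x^{-\alpha}\,dx$ by invoking Lemma \ref{lem_H2} after an affine change of variables and a cutoff/extension argument; symmetrically on the right half one gets $\int_{1/2}^1 u(x)^2 (1-x)^{-\alpha}\,dx$. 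Since on $[0,\tfrac12]$ we have $\min\{x^\alpha,(1-x)^\alpha\} = x^\alpha$ (up to a harmless constant, as $(1-x)^\alpha \in [2^{-\alpha},1]$ there) and similarly on the right, adding the two pieces gives the claimed bound. The zero-average hypothesis is what lets me absorb the $\|u\|_2^2$ "error" using part (1), since $\|u\|_{L^2}^2 \lesssim \int\int (u(x)-u(y))^2\,dx\,dy$ on the unit interval.

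For the general inequality \eqref{ineqI} on an arbitrary finite interval $I$, I would reduce to the unit-interval case by the affine map sending $I$ to $[0,1]$: writing $|I| = L$ and tracking how each term scales (the Gagliardo-type double integral scales like $L^{1-\alpha}$ relative to the pulled-back function, the $L^2$ norm like $L$, and $\int_I f^2 \dist(x,I^c)^{-\alpha}\,dx$ like $L^{1-\alpha}$), one checks that the weights $|I|^{-\alpha}$ in front of $\|f\|_{L^2(I)}^2$ and the $\dist(x,I^c)^{-\alpha}$ in the target are precisely the scale-invariant choices, so the constants $\beta_1,\beta_2$ depending only on $\alpha$ transfer directly from the $[0,1]$ computation. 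The final statement, for $f$ with $\int_I f = 0$, follows by combining \eqref{ineqI} with the Poincar\'e-type bound $\beta_1 |I|^{-\alpha}\|f\|_{L^2(I)}^2 \lesssim \int_{x\ne y\in I} |f(x)-f(y)|^2 |x-y|^{-1-\alpha}\,dx\,dy$, which is itself a consequence of part (1) (scaled to $I$) together with the elementary inequality $|x-y|^{1+\alpha} \le |I|^{1+\alpha}$ for $x,y\in I$; absorbing the lower-order term into the left side then yields the constant $\beta_3$.

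The main obstacle I anticipate is the extension/cutoff step needed to apply Lemma \ref{lem_H2}: that lemma is stated for functions on all of $[0,\infty)$ with a decay hypothesis $|u(x)|\lesssim x^{-2}$ at infinity, whereas here $u$ lives only on a bounded interval. One cannot simply extend $u$ by zero, since that would in general destroy the zero-average property and, more seriously, could create a jump that inflates the Gagliardo seminorm in an uncontrolled way relative to $\int u(x)^2 x^{-\alpha}\,dx$ near the endpoint. The careful way around this is to note that only the behavior near the endpoint $x=0$ matters for the weight $x^{-\alpha}$ — away from the endpoint the weight is bounded and the estimate is trivial from part (1) — so one localizes to a neighborhood of $0$, multiplies by a smooth cutoff equal to $1$ near $0$, verifies that multiplication by the cutoff changes both sides by controllable amounts, and only then extends by zero and applies Lemma \ref{lem_H2}. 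Getting the bookkeeping right so that all error terms are dominated either by the original double integral or by $\|f\|_{L^2}^2$ (which is in turn controlled via part (1)) is the delicate part; everything else is routine scaling.
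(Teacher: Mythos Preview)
Your proposal is correct and follows essentially the same approach as the paper's proof: apply Lemma~\ref{lem_H2} to $u$ multiplied by a smooth cutoff supported near one endpoint (the paper uses $\chi$ supported in $[0,2/3]$), absorb the resulting $\|u\|_{L^2}^2$ error term via the identity in part (1), use the symmetry $x\mapsto 1-x$ for the other endpoint, and rescale to a general interval $I$. Your discussion of the cutoff/extension bookkeeping is exactly the point the paper leaves implicit, and your derivation of the final mean-zero statement from \eqref{ineqI} via the rescaled Poincar\'e inequality $|I|^{-\alpha}\|f\|_{L^2(I)}^2 \lesssim |I|^{-1-\alpha}\int_{I\times I}|f(x)-f(y)|^2\,dx\,dy \le \int_{I\times I}|f(x)-f(y)|^2|x-y|^{-1-\alpha}\,dx\,dy$ is the natural one.
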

\begin{rem}
Remarkably, the Hardy inequality (2) does not hold if we assume $u \in C_c^{\infty}((0,1))$.
As a counterexample, one can take $u_n \in C_c^{\infty} ((0,1))$  such
that $u_n(x)=1$ for $\frac 1n \le x \le 1- \frac 1n$, $u_n(x)=0$ for $x\le \frac 1 {2n}$ and
$1-\frac 1{2n} \le x\le 1$, and $|u^{\prime}| \lesssim n$.  Then we have
\begin{align}
&\int_0^1 \frac {u_n(x)^2} { x^{\alpha} (1-x)^{\alpha}} dx \sim 1; \\
& \int_0^1 \int_0^1 \frac { (u_n(x) -u_n(y) )^2} {|x-y|^{1+\alpha} }
dx dy \lesssim n^{-(1-\alpha)}.
\end{align}
One can see \cite{Dyda04} for an extensive discussion on general fractional order Hardy inequalities and counterexamples. 
\end{rem}

\begin{proof}
The first identity is obvious. For the second inequality, one can apply Lemma \ref{lem_H2}
to $u(x) \chi(x)$ where $\chi$ is a smooth cut-off function supported in $[0, 2/3]$. This then
yields 
\begin{align}
\int_0^1 \frac {|u(x)-u(y)|^2}{|x-y|^{1+\alpha}} dx dy + \operatorname{const}
\cdot \int_0^1 u(x)^2 dx
\gtrsim \int_0^{\frac 12} \frac {u(x)^2}{x^{\alpha}} dx.
\end{align}
The desired inequality  follows easily by using the first identity. To get the extra factor
$(1-x)^{-\alpha}$ one can invoke the symmetry $x\to 1-x$. 
The inequality \eqref{ineqI} follows from rescaling and reducing to the case $I=(0,1)$.
\end{proof}

The following theorem is a special case of Bourdaud-Meyer \cite{Mey91}. We reproduce the
proof here to highlight the needed changes for the finite domain case (see Theorem 
\ref{thm2Int}).
\begin{thm}
Let $0<s<1/2$. We have
\begin{align}
&\| u^{\prime} \cdot 1_{u>0} \|_{H^{s} (\mathbb R)}
\lesssim \| u^{\prime} \|_{H^{s} (\mathbb R)}; \\
& \| T_2 u \|_{H^{1+s}(\mathbb R)} \lesssim \| u \|_{H^{1+s}(\mathbb R)}.
\end{align}
\end{thm}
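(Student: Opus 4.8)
The plan is to reduce the second estimate to the first. Recall the standard chain rule $(u^+)'=u'\,1_{u>0}$ a.e.\ for $u\in H^1(\mathbb R)$; since $|u^+|\le|u|$ and $|(u^+)'|\le|u'|$ pointwise a.e., one has $\|u^+\|_{H^1(\mathbb R)}\le\|u\|_{H^1(\mathbb R)}$, whence
\[
\|T_2u\|_{H^{1+s}(\mathbb R)}=\|u^+\|_{H^1(\mathbb R)}+\|(u^+)'\|_{\dot H^s(\mathbb R)}\le\|u\|_{H^1(\mathbb R)}+\|u'\,1_{u>0}\|_{\dot H^s(\mathbb R)},
\]
so it suffices to prove the first estimate; in fact the argument below bounds $\|u'\,1_{u>0}\|_{\dot H^s(\mathbb R)}$ by $\|u'\|_{H^s(\mathbb R)}$, which is $\lesssim\|u\|_{H^{1+s}(\mathbb R)}$.

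Put $v=u'$ and $w=v\,1_{u>0}$. Since $u\in H^{1+s}(\mathbb R)\subset H^1(\mathbb R)$, its continuous representative vanishes at $\pm\infty$, so $\{u>0\}$ is a disjoint union of open intervals $\bigsqcup_j I_j$, and $u=0$ at every finite endpoint of every $I_j$. I would split the double integral defining $\|w\|_{\dot H^s(\mathbb R)}^2$ over $\mathbb R\times\mathbb R=\{u>0\}^2\cup\{u\le0\}^2\cup(\text{two mixed regions})$: on $\{u>0\}^2=\bigcup_{j,k}I_j\times I_k$ one has $w=v$ on both coordinates, so that part is $\le\|v\|_{\dot H^s(\mathbb R)}^2$; on $\{u\le0\}^2$ one has $w\equiv0$; and, using $\mathrm{dist}(x,\{u\le0\})=\mathrm{dist}(x,I_j^c)$ for $x\in I_j$ together with $\int_{|z|\ge r}|z|^{-1-2s}\,dz=s^{-1}r^{-2s}$ (finite precisely because $2s<1$), the two mixed regions contribute at most
\[
2\sum_j\int_{I_j}v(x)^2\!\!\int_{\{u\le0\}}\frac{dy}{|x-y|^{1+2s}}\,dx\ \le\ \frac2s\sum_j\int_{I_j}\frac{v(x)^2}{\mathrm{dist}(x,I_j^c)^{2s}}\,dx.
\]

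It remains to bound $\sum_j\int_{I_j}\mathrm{dist}(x,I_j^c)^{-2s}v(x)^2\,dx$ by $\|v\|_{H^s(\mathbb R)}^2$, and this is the crux. For a bounded component $I_j=(a_j,b_j)$ the endpoint values give $\int_{I_j}v\,dx=u(b_j)-u(a_j)=0$, so the \emph{mean-zero} Hardy inequality in Lemma \ref{lem_H3} (with $\alpha=2s$, constant $\beta_3$ depending only on $s$) applies with \emph{no} $|I_j|^{-2s}\|v\|_{L^2(I_j)}^2$ error term, giving $\int_{I_j}\mathrm{dist}(x,I_j^c)^{-2s}v(x)^2\,dx\lesssim\int_{x\ne y\in I_j}\frac{(v(x)-v(y))^2}{|x-y|^{1+2s}}\,dx\,dy$; summing over bounded components and using that the squares $I_j^2$ are pairwise disjoint in $\mathbb R^2$ bounds their total contribution by $\|v\|_{\dot H^s(\mathbb R)}^2$. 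There are at most two unbounded components; for one of the form $(a_j,\infty)$ (the case $\mathbb R$ is trivial) I would split $\int_{a_j}^\infty=\int_{a_j}^{a_j+1}+\int_{a_j+1}^\infty$, estimate the second integral by $\|v\|_{L^2(\mathbb R)}^2$ since there the weight is $\le1$, and apply inequality \eqref{ineqI} of Lemma \ref{lem_H3} on the finite interval $(a_j,a_j+1)$ to the first (picking up one more $\|v\|_{L^2(\mathbb R)}^2$). Collecting everything gives $\|w\|_{\dot H^s(\mathbb R)}^2\lesssim\|v\|_{\dot H^s(\mathbb R)}^2+\|v\|_{L^2(\mathbb R)}^2$, which with the trivial $\|w\|_{L^2}\le\|v\|_{L^2}$ is exactly the first estimate.

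The main obstacle is precisely the last paragraph: a blind application of the Hardy inequality \eqref{ineqI} to each $I_j$ leaves the error $\sum_j|I_j|^{-2s}\|v\|_{L^2(I_j)}^2$, which is genuinely unbounded in general (think of many very short components of $\{u>0\}$). What rescues the argument is the geometric fact that $u$ vanishes at the finite endpoints of each component of $\{u>0\}$, so that $v=u'$ has mean zero on each such component; this is exactly what permits the sharp error-free Hardy inequality with a constant uniform in $j$, and morally it is what distinguishes the range $1<1+s<\tfrac32$ from the forbidden endpoint $1+s=\tfrac32$.
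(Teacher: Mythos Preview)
Your proof is correct and follows essentially the same route as the paper: decompose $\{u>0\}$ into its connected components $I_j$, reduce the cross term to $\sum_j\int_{I_j}\mathrm{dist}(x,I_j^c)^{-2s}(u'(x))^2\,dx$, and kill each summand with the mean-zero Hardy inequality of Lemma~\ref{lem_H3}, the mean-zero condition coming from $u(a_j)=u(b_j)=0$. The only difference is cosmetic: the paper treats possibly unbounded $I_j$ in one line (interpreting $u(\pm\infty)=0$ and tacitly invoking the half-line Hardy inequality of Lemma~\ref{lem_H2}), whereas you split off a unit interval near the finite endpoint and apply \eqref{ineqI} there---both are fine.
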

\begin{proof}
Set $\alpha=2s$. Write $I=\{x:\, u(x)>0 \}$ as a countable disjoint union of intervals $I_j$ such
that each $I_j=(a_j, b_j)$ satisfies $u(a_j)=u(b_j)=0$.  Here if $a_j$ or $b_j$ are infinity the value
of $u(a_j)$ or $u(b_j)$ are understood in the limit sense. 
By using Lemma \ref{lem_H3}, we have
\begin{align}
\| u^{\prime} 1_{u>0} \|_{H^s(\mathbb R)}^2
&\lesssim \| u^{\prime} \|_{H^s(\mathbb R)}^2
+ \int_{x\in\mathbb R:\, u(x)>0} (u^{\prime}(x))^2 \int_{u(y)<0}
\frac 1 {|x-y|^{1+\alpha}} dy dx \notag \\
& \lesssim \| u^{\prime}\|_{H^{s}(\mathbb R)}^2 + 
\sum_j \int_{I_j} \frac { (u^{\prime}(x))^2} {  (\mathrm{dist} (x, I_j^c) )^{\alpha} } dx
\notag \\
& \lesssim \| u^{\prime}\|_{H^{s}(\mathbb R)}^2
+ \sum_j \int_{I_j \times I_j} \frac { (u^{\prime}(x) - u^{\prime}(y) )^2}
{ |x-y|^{1+\alpha} } dx d y  \notag \\
& \lesssim \| u^{\prime}\|_{H^{s}(\mathbb R)}^2.
\end{align}
\end{proof}

\begin{cor} \label{cor2.0A}
Let $0<s<1/2$ and $d\ge 2$. Then
\begin{align}
\| T_2 u \|_{H^{1+s}(\mathbb R^d)} \lesssim \| u \|_{H^{1+s}(\mathbb R^d)}.
\end{align}
\end{cor}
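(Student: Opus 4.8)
The plan is to reduce the $d$-dimensional estimate for $T_2$ to the one-dimensional result just established, by exploiting the tensor structure of the $H^{1+s}(\mathbb R^d)$ norm together with a Fubini-type slicing argument. First I would recall that for $0<s<1/2$ the inhomogeneous norm $\|u\|_{H^{1+s}(\mathbb R^d)}$ is comparable to $\|u\|_{H^1(\mathbb R^d)} + \sum_{k=1}^d \|\partial_{x_k} u\|_{\dot H^s(\mathbb R^d)}$, and that the $\dot H^s(\mathbb R^d)$ seminorm can be written, up to a constant, as an iterated quantity controlled by one-dimensional $\dot H^s$ seminorms along each coordinate direction. Concretely, for each fixed direction $e_k$ and each fixed choice of the remaining $d-1$ coordinates $\hat x_k$, the slice $x_k \mapsto u(\hat x_k, x_k)$ is (for a.e.\ $\hat x_k$) an $H^{1+s}$ function of one variable, and one has the standard comparison
\begin{align}
\|\partial_{x_k} u\|_{\dot H^s(\mathbb R^d)}^2 \lesssim \int_{\mathbb R^{d-1}} \int_{x_k \ne y_k} \frac{|\partial_{x_k}u(\hat x_k, x_k) - \partial_{x_k} u(\hat x_k, y_k)|^2}{|x_k - y_k|^{1+2s}}\, dx_k\, dy_k\, d\hat x_k + (\text{lower order}),
\end{align}
where the lower-order term is absorbed into $\|u\|_{H^1}$; in fact the reverse comparison (lower bound by the full $\dot H^s$ seminorm) is the nontrivial direction and one typically uses that $\dot H^s(\mathbb R^d)$ is bounded above by a sum over coordinate directions of the corresponding slicewise seminorms, which is the elementary inequality one needs here.

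Next I would observe that the Nemytskii operator $T_2$ acts slicewise: $(T_2 u)(\hat x_k, x_k) = T_2\bigl(u(\hat x_k, \cdot)\bigr)(x_k)$, and moreover $\partial_{x_k}(T_2 u) = (\partial_{x_k} u)\cdot 1_{u>0}$ in the distributional sense, with the chain rule valid slicewise for a.e.\ $\hat x_k$ since $u(\hat x_k, \cdot) \in H^1$ for a.e.\ slice. Therefore, applying the one-dimensional theorem (the first displayed inequality in the preceding theorem, $\|u' 1_{u>0}\|_{H^s(\mathbb R)} \lesssim \|u'\|_{H^s(\mathbb R)}$) to the slice $u(\hat x_k, \cdot)$ and integrating in $\hat x_k$ gives
\begin{align}
\int_{\mathbb R^{d-1}} \|\partial_{x_k}(T_2 u)(\hat x_k, \cdot)\|_{H^s(\mathbb R)}^2\, d\hat x_k \lesssim \int_{\mathbb R^{d-1}} \|\partial_{x_k} u(\hat x_k, \cdot)\|_{H^s(\mathbb R)}^2\, d\hat x_k \lesssim \|u\|_{H^{1+s}(\mathbb R^d)}^2.
\end{align}
Here one must be slightly careful: the one-dimensional theorem is stated for the homogeneous-type quantity, so one actually controls the slicewise $\dot H^s$ seminorm of $\partial_{x_k}(T_2u)$ by the slicewise $H^s$ norm of $\partial_{x_k}u$, and separately the $L^2$ and $\dot H^1$ parts of $T_2 u$ are trivially bounded since $|T_2 u| \le |u|$ and $|\partial(T_2 u)| \le |\partial u|$ pointwise. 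Summing over $k = 1, \dots, d$ and using the coordinate-direction comparison for $\dot H^s$ from the first step then yields $\|T_2 u\|_{H^{1+s}(\mathbb R^d)} \lesssim \|u\|_{H^{1+s}(\mathbb R^d)}$.

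The main obstacle I anticipate is justifying the comparison between the genuinely $d$-dimensional $\dot H^s$ seminorm of $\partial_{x_k}(T_2 u)$ and the sum of its one-dimensional slicewise seminorms — i.e.\ the inequality $\|g\|_{\dot H^s(\mathbb R^d)}^2 \lesssim \sum_{k=1}^d \int_{\mathbb R^{d-1}} \|g(\hat x_k,\cdot)\|_{\dot H^s(\mathbb R)}^2 d\hat x_k + \|g\|_{L^2}^2$ applied with $g = \partial_{x_k}(T_2 u)$, which one proves most cleanly on the Fourier side via $|\xi|^{2s} \lesssim \sum_k |\xi_k|^{2s}$ (valid for $0 < s < 1$, indeed all $s$) together with the Plancherel identity for the slicewise seminorm. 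One should double-check the endpoint behavior and the role of the inhomogeneous correction term, and verify that the slicewise chain rule and the a.e.\ finiteness of slice norms are legitimate by a standard approximation argument (mollify $u$, apply the identity, pass to the limit using the one-dimensional bound as an a priori estimate). Everything else is routine bookkeeping with Fubini's theorem and the definitions.
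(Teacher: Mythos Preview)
Your approach is essentially the paper's own: decompose the $H^{1+s}(\mathbb R^d)$ norm on the Fourier side via $(1+|\xi|^2)^{1+s}\lesssim \sum_j (1+|\xi_j|^2)^{1+s}$ into mixed norms $L^2_{x_j'}H^{1+s}_{x_j}$, then apply the one-dimensional theorem slicewise. Your middle paragraph (slice $\partial_{x_k}(T_2u)$ in the \emph{same} direction $x_k$, apply the 1D bound, then sum over $k$) is exactly this and is correct.

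One caveat on the bookkeeping. Your first displayed inequality, bounding $\|\partial_{x_k}u\|_{\dot H^s(\mathbb R^d)}$ by the $x_k$-directional slice alone plus an $H^1$ remainder, is false as written: on the Fourier side it would require $|\xi|^{2s}|\xi_k|^2\lesssim |\xi_k|^{2+2s}+1+|\xi|^2$, which fails e.g.\ for $|\xi_k|\sim|\xi|^{1-s/2}$ with $|\xi|\to\infty$. Similarly, the plan in your last paragraph to apply the all-directions slicing bound to $g=\partial_{x_k}(T_2u)$ for a fixed $k$ would generate off-diagonal terms $\|\partial_{x_k}(T_2u)(\hat x_j,\cdot)\|_{\dot H^s_{x_j}}$ with $j\ne k$, and the 1D theorem does \emph{not} control those (the indicator $1_{u>0}$ and the derivative then sit in different variables). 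The fix is precisely what your middle paragraph already does and what the paper writes: pass directly from $\|T_2u\|_{H^{1+s}(\mathbb R^d)}$ to the diagonal sum $\sum_k\|\partial_{x_k}(T_2u)\|_{L^2_{\hat x_k}H^s_{x_k}}$ (equivalently $\sum_j\|T_2u\|_{L^2_{x_j'}H^{1+s}_{x_j}}$), and never try to control $\|\partial_{x_k}(T_2u)\|_{\dot H^s(\mathbb R^d)}$ for a single $k$ in isolation.
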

\begin{proof}
By a partition of unity in frequency space, we have
\begin{align}
\| T_2 u \|_{H^{1+s}(\mathbb R^d)} \lesssim
\sum_{j=1}^d \| T_2 u \|_{L_{x_j^{\prime}}^2 H_{x_j}^{1+s} (\mathbb R^{d-1}\times
\mathbb R)},
\end{align}
where $x_j^{\prime}$ denotes the variables $x_l$ with $l\ne j$. 
The desired result then follows from the one-dimensional case.
\end{proof}

\begin{thm}[Boundedness of $T_2$ on a finite interval] \label{thm2Int}
Let $0<s<1/2$ and $\Omega=(0,1)$. We have
\begin{align}
&\| u^{\prime} \cdot 1_{u>0} \|_{H^{s} (\Omega)}
\lesssim \| u^{\prime} \|_{H^{s} (\Omega)}; \\
& \| T_2 u \|_{H^{1+s}(\Omega)} \lesssim \| u \|_{H^{1+s}(\Omega)}.
\end{align}
\end{thm}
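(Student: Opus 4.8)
Set $\alpha=2s\in(0,1)$. The plan is to reduce both inequalities to the single homogeneous estimate
\[
\|u'\,1_{u>0}\|_{\dot H^{s}(\Omega)}^{2}\lesssim\|u'\|_{L^{2}(\Omega)}^{2}+\|u'\|_{\dot H^{s}(\Omega)}^{2}.
\]
Indeed $T_{2}u=u^{+}$ obeys $0\le u^{+}\le|u|$ and $(u^{+})'=u'\,1_{u>0}$ a.e., so $\|u^{+}\|_{H^{1}(\Omega)}\le\|u\|_{H^{1}(\Omega)}$ and both claimed bounds follow from it. Since $u\in H^{1+s}(\Omega)\subset H^{1}(0,1)$ and $1+s>\tfrac12$, $u$ is continuous on $[0,1]$, so $A:=\{x\in(0,1):u(x)>0\}$ is a countable disjoint union of open intervals $I_{j}=(a_{j},b_{j})$. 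At most two of them can touch $\partial(0,1)$ (at most one with $a_{j}=0$, at most one with $b_{j}=1$); call these the \emph{boundary} intervals, and the rest \emph{interior}. For an interior $I_{j}$, continuity forces $u(a_{j})=u(b_{j})=0$, hence $\int_{I_{j}}u'=0$.

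First I would expand the seminorm exactly as in the $\mathbb{R}$ case treated above: splitting $(0,1)^{2}$ according to membership in $A$ and using that $u'=0$ a.e. on $\{u=0\}$,
\[
\|u'\,1_{u>0}\|_{\dot H^{s}(0,1)}^{2}\le\|u'\|_{\dot H^{s}(0,1)}^{2}+2\sum_{j}\int_{I_{j}}u'(x)^{2}\Big(\int_{(0,1)\setminus A}\frac{dy}{|x-y|^{1+\alpha}}\Big)\,dx .
\]
For $x\in I_{j}$ the set $(0,1)\setminus A$ is disjoint from the interval $I_{j}$, so the inner integral is $\lesssim\dist(x,I_{j}^{c})^{-\alpha}$; and since $(0,1)\setminus A\subset(0,1)$ lies on one side of a boundary interval, for $I_{0}=(0,b_{0})$ it is in fact $\lesssim(b_{0}-x)^{-\alpha}$, and symmetrically near $1$. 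For the interior intervals I would then apply the mean-zero case of Lemma \ref{lem_H3}: using $\int_{I_{j}}u'=0$,
\[
\int_{I_{j}}\frac{u'(x)^{2}}{\dist(x,I_{j}^{c})^{\alpha}}\,dx\lesssim\int_{I_{j}\times I_{j}}\frac{(u'(x)-u'(y))^{2}}{|x-y|^{1+\alpha}}\,dx\,dy ,
\]
and, the squares $I_{j}\times I_{j}$ being pairwise disjoint inside $(0,1)^{2}$, summing over $j$ bounds the interior contribution by $\|u'\|_{\dot H^{s}(0,1)}^{2}$.

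The main obstacle is the (at most two) boundary intervals, where $u'|_{I_{j}}$ need not have mean zero so Lemma \ref{lem_H3} does not apply directly. For $I_{0}=(0,b_{0})$ I must control $\int_{0}^{b_{0}}u'(x)^{2}(b_{0}-x)^{-\alpha}\,dx$, and the device is to restore the mean-zero condition by an odd reflection about $0$. Let $v$ on $(-b_{0},b_{0})$ be given by $v(x)=u'(x)$ for $x>0$ and $v(x)=-u'(-x)$ for $x<0$, so that $\int_{-b_{0}}^{b_{0}}v=0$ automatically. Applying the mean-zero case of Lemma \ref{lem_H3} on $(-b_{0},b_{0})$, where the distance to the complement equals $b_{0}-|x|$, gives
\[
\int_{0}^{b_{0}}\frac{u'(x)^{2}}{(b_{0}-x)^{\alpha}}\,dx\le\int_{-b_{0}}^{b_{0}}\frac{v(x)^{2}}{(b_{0}-|x|)^{\alpha}}\,dx\lesssim\int_{(-b_{0},b_{0})^{2}}\frac{(v(x)-v(y))^{2}}{|x-y|^{1+\alpha}}\,dx\,dy .
\]
I would split the last integral into the two same-sign half-squares — each equal to $\|u'\|_{\dot H^{s}(0,b_{0})}^{2}\le\|u'\|_{\dot H^{s}(0,1)}^{2}$ — and the mixed blocks, which after $y\mapsto-y$ take the form $\int_{0}^{b_{0}}\int_{0}^{b_{0}}\frac{(u'(x)+u'(y))^{2}}{(x+y)^{1+\alpha}}\,dx\,dy$ and, using $(a+b)^{2}\le2a^{2}+2b^{2}$ and integrating out one variable, are $\lesssim\int_{0}^{b_{0}}u'(x)^{2}x^{-\alpha}\,dx\le\int_{0}^{1}u'(x)^{2}x^{-\alpha}\,dx$. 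This final term is exactly what the fractional Hardy inequality \eqref{ineqI}, applied on $I=(0,1)$, bounds by $\|u'\|_{L^{2}(0,1)}^{2}+\|u'\|_{\dot H^{s}(0,1)}^{2}$. The interval touching $1$ is handled by the reflection $x\mapsto1-x$. Collecting the interior and boundary estimates, and adding $\|u'\,1_{u>0}\|_{L^{2}}\le\|u'\|_{L^{2}}$ and $\|u^{+}\|_{H^{1}}\le\|u\|_{H^{1}}$, finishes both inequalities; the reflection step is the only part that genuinely differs from the unbounded case.
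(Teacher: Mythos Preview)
Your argument is correct. The decomposition into interior and boundary intervals, and the treatment of interior ones via the mean-zero case of Lemma~\ref{lem_H3}, are exactly as in the paper. Where you diverge is in handling the (at most two) boundary intervals.

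The paper proceeds by a dichotomy on the length of $I_1=(0,b)$: if $b\ge 1/4$ it applies \eqref{ineqI} directly on $I_1$, since then the penalty $|I_1|^{-\alpha}\|u'\|_{L^2(I_1)}^2$ is harmless; if $b<1/4$ it instead compares against the complementary interval $J_1=(b,1)$, bounding $\int_{I_1}(u')^2(b-x)^{-\alpha}\,dx$ by the cross term $\int_{I_1\times J_1}\frac{(u'(x)-u'(y))^2}{|x-y|^{1+\alpha}}$ plus a Hardy term on $J_1$, the latter controlled by \eqref{ineqI} because $|J_1|\ge 3/4$. Your route avoids this case split entirely: the odd reflection of $u'$ about the endpoint manufactures a mean-zero function on $(-b_0,b_0)$ regardless of the size of $b_0$, and you then absorb the reflection's cross terms via \eqref{ineqI} applied once on the full interval $(0,1)$. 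What the paper's approach buys is that it never leaves the physical interval $(0,1)$ and uses only the already-formulated \eqref{ineqI}; what yours buys is uniformity in $b_0$ and a single mechanism (reflection) in place of two cases. Both ultimately rest on the same Hardy-type lemma.
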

\begin{proof}
Set $\alpha=2s$.  If $\{u>0\}=(0,1)$ there is nothing to prove.
Otherwise write $\{x\in (0,1): \, u(x)>0 \}$ as a countable union 
of  maximal intervals $I_j$ such that for each $I_j=(a_j, b_j)$, either
$u(a_j)=u(b_j)=0$ with $0<a_j<b_j<1$, or $a_j=0$, $u(b_j)=0$, or $u(a_j)=0$, $b_j=1$.
In yet other words, either $I_j$ is strictly contained in $\Omega$, or $I_j$ intersects with the
boundary of $\Omega$.  One should note that those ``interior" intervals can be treated 
in the same
way as before, and we only need to take care of those $I_j$ which intersects with the boundary
of $\Omega$.
With no loss we consider the case  $I_1=(0, b)$ where $u(0)>0$,
$u(b)=0$. We discuss several further cases. 

Case 1: $b\ge 1/4$. Clearly then
\begin{align}
\int_{I_1} \frac {(u^{\prime}(x))^2} {\mathrm{dist}(x, I_1^c)^{\alpha}}
dx \lesssim \int_{I_1\times I_1} 
\frac{ (u^{\prime}(x)-u^{\prime}(y))^2} {|x-y|^{1+\alpha}} dx dy
+ |I_1|^{-\alpha} \|u^{\prime}\|_{L^2(I_1)}^2  \lesssim
\operatorname{OK}.
\end{align}

Case 2: $0<b<1/4$. We set $J_1=(b, 1)$. Observe that $|J_1|\ge 3/4$ and
\begin{align}
\int_{J_1} \frac { (u^{\prime}(y))^2} { \mathrm{dist} (y, J_1^c)^{\alpha}}
dy
\lesssim \int_{J_1\times J_1}
\frac { (u^{\prime}(y_1) -u^{\prime}(y_2) )^2}  {|y_1-y_2|^{1+\alpha}}
dy_1 dy_2+  |J_1|^{-\alpha} \| u^{\prime} \|_{L^2(J_1)}^2.
\end{align}
Clearly
\begin{align}
\int_{I_1} \frac{(u^{\prime}(x) )^2} {\mathrm{dist}(x,I_1^c)^{\alpha} }
& \lesssim \int_{I_1 \times J_1}
\frac{(u^{\prime}(x))^2} {|x-y|^{1+\alpha}} dx dy \notag \\
& \lesssim \int_{I_1\times J_1}
\frac{ (u^{\prime}(x)-u^{\prime}(y) )^2} {|x-y|^{1+\alpha}} dxdy
+\int_{J_1}
\frac{ (u^{\prime}(y))^2} {\mathrm{dist}(y, J_1^c)^{\alpha}} dy.
\end{align}
Thus this case is also OK.

\end{proof}

\section{The general case $d\ge 2$}

\begin{lem} \label{lem2.1A}
Let $0<s<1/2$. Denote $Q_+=\{ (y_1, y_2):\, |y_1|<1, 0\le y_2 <f(y_1)\}$ where $f$
is a positive continuous function.
Suppose $K$ is a nonempty compact set which is  properly contained in $Q_+$.
In yet other words, the set $\tilde K = \{ (y_1, \pm y_2): \, (y_1,y_2) \in K \}$ is in
the interior of $ Q=\{(y_1, y_2):\, |y_1|<1, |y_2|<f(y_1)\}$.  Assume $g$ vanishes on
$Q_+ \setminus K$. 
We have
\begin{align} \label{eq2.1}
\int_{x \in Q_+, y \in Q_+}
 \frac { |g(y)|^2} { (|x_1-y_1|+|x_2+y_2|)^{2+2s} }
 dy dx \le C_1 \| g \|_{H^s(Q_+)}^2,
 \end{align}
 where $C_1>0$ depends only on ($s$, $K$, $f$). 
\end{lem}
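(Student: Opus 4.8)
The plan is to carry out the $x$–integration first, which turns the left-hand side of \eqref{eq2.1} into a weighted $L^2$–norm of $g$ with weight $y_2^{-2s}$, and then to recognize the resulting estimate as a fractional Hardy inequality that follows by slicing from Lemma \ref{lem_H3}. Concretely, for fixed $y=(y_1,y_2)\in Q_+$ one sets
\begin{align*}
\Phi(y):=\int_{x\in Q_+}\frac{dx}{\bigl(|x_1-y_1|+|x_2+y_2|\bigr)^{2+2s}},
\end{align*}
and estimates $\Phi(y)$ by enlarging the $x$–domain to $\{x_2>0,\ x_1\in\R\}$: the $x_2$–integral over $(0,\infty)$ contributes $\tfrac{1}{1+2s}(|x_1-y_1|+y_2)^{-1-2s}$, and the subsequent $x_1$–integral over $\R$ then gives $\Phi(y)\lesssim y_2^{-2s}$ with constant depending only on $s$. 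Since $g$ vanishes on $Q_+\setminus K$, Tonelli's theorem shows that the left-hand side of \eqref{eq2.1} equals $\int_{Q_+}|g(y)|^2\Phi(y)\,dy$, so it suffices to prove the Hardy-type bound
\begin{align*}
\int_{Q_+}\frac{|g(y)|^2}{y_2^{2s}}\,dy\lesssim\|g\|_{H^s(Q_+)}^2 .
\end{align*}

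For this I would slice $Q_+$ along vertical lines. Let $\pi(K)$ be the projection of $K$ onto the $y_1$–axis; it is a compact subset of $(-1,1)$, and since $f$ is continuous and positive, $f\ge f_*>0$ on $\pi(K)$. For each such $y_1$ apply the interval inequality \eqref{ineqI} of Lemma \ref{lem_H3} (with $\alpha=2s$) to the slice $y_2\mapsto g(y_1,y_2)$ on the interval $I=(0,f(y_1))$. Because $0\in I^c$ one has $\dist(y_2,I^c)\le y_2$, hence $\dist(y_2,I^c)^{-2s}\ge y_2^{-2s}$, and \eqref{ineqI} yields
\begin{align*}
\int_0^{f(y_1)}\frac{g(y_1,y_2)^2}{y_2^{2s}}\,dy_2
&\lesssim \int_0^{f(y_1)}\!\!\int_0^{f(y_1)}\frac{|g(y_1,y_2)-g(y_1,y_2')|^2}{|y_2-y_2'|^{1+2s}}\,dy_2\,dy_2'\\
&\quad + f_*^{-2s}\int_0^{f(y_1)}g(y_1,y_2)^2\,dy_2 .
\end{align*}
Integrating in $y_1$, the last term contributes $\lesssim\|g\|_{L^2(Q_+)}^2$, so the remaining task is to bound the integrated vertical seminorm $\int_{-1}^{1}\|g(y_1,\cdot)\|_{\dot H^s((0,f(y_1)))}^2\,dy_1$ by $\|g\|_{H^s(Q_+)}^2$.

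Here I would use that $g$ is supported in $K$, which lies at a positive distance from the non-flat part $\{|y_1|=1\}\cup\{y_2=f(y_1)\}$ of $\partial Q_+$: extension by zero is then bounded from $H^s(Q_+)$ into $H^s(\R^2)$ — trivially near the non-flat boundary, where $g$ vanishes, and across the flat piece $\{y_2=0\}$ by the standard half-space fact, valid precisely because $s<\tfrac12$. Writing $\tilde g$ for this extension, so that $\|\tilde g\|_{H^s(\R^2)}\lesssim\|g\|_{H^s(Q_+)}$, the elementary pointwise bound $|\xi|^{2s}\ge|\xi_2|^{2s}$ together with Plancherel in $y_2$ gives $\int_{\R}\|\tilde g(y_1,\cdot)\|_{\dot H^s(\R)}^2\,dy_1\lesssim\|\tilde g\|_{\dot H^s(\R^2)}^2$; since each slice of $g$ is the restriction to $(0,f(y_1))$ of the corresponding slice of $\tilde g$, the left-hand side of this dominates $\int_{-1}^{1}\|g(y_1,\cdot)\|_{\dot H^s((0,f(y_1)))}^2\,dy_1$. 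Assembling the three steps proves the Hardy bound and hence \eqref{eq2.1}, with $C_1$ depending only on $s$, $K$ and $f$. I expect this last step to be the only real obstacle — namely the boundedness of zero-extension on $H^s$ for $s<\tfrac12$ under the support hypothesis, together with the reduction of the vertical-slice seminorm to the full two-dimensional seminorm; should one wish to remain fully self-contained, the slice estimate can instead be obtained directly by the familiar averaging argument, estimating $|g(y_1,y_2)-g(y_1,y_2')|$ through an auxiliary point ranging over a ball of radius $\sim|y_2-y_2'|$ that is contained in $Q_+$ (which is possible on $\supp g$).
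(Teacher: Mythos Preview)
Your argument is correct and follows essentially the same route as the paper's proof: integrate out $x$ to reduce to the weighted bound $\int_{Q_+}|g|^2 y_2^{-2s}\,dy\lesssim\|g\|_{H^s(Q_+)}^2$, apply the interval Hardy inequality \eqref{ineqI} on vertical slices, and then control the integrated slice seminorm via an extension $\tilde g$ to $\mathbb R^2$ together with the elementary mixed-norm bound $\|\tilde g\|_{L^2_{y_1}H^s_{y_2}}\lesssim\|\tilde g\|_{H^s(\mathbb R^2)}$. The only cosmetic difference is that the paper builds $\tilde g$ by reflection and smooth truncation, whereas you use zero-extension across $\{y_2=0\}$ (valid precisely because $s<\tfrac12$); both choices work equally well here.
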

\begin{figure}[!h]
\centering
\includegraphics[width=0.4\textwidth]{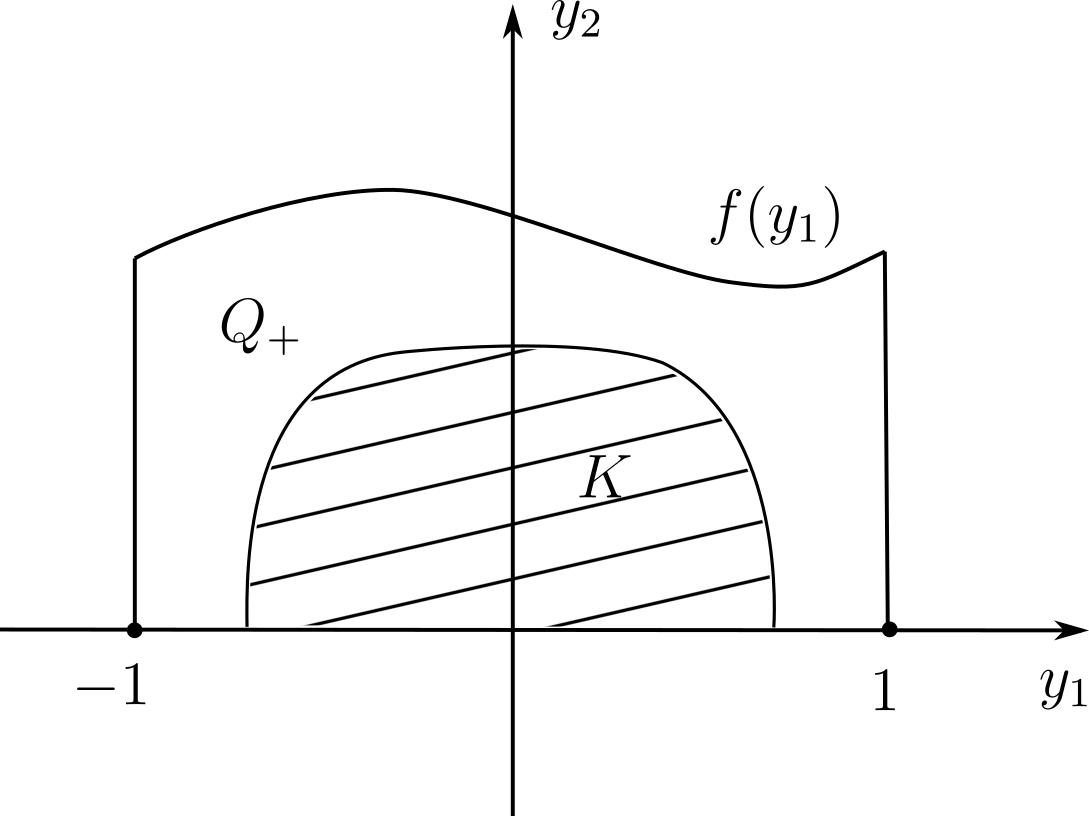}
\caption{The set $Q_+$ and $K$ in Lemma 3.1 }\label{fig:intro1}
\end{figure}

\begin{proof}
Denote $\tilde g$ as an extension of $g$ to $\mathbb R^2$ such that $\tilde g$ has
compact support and $\|\tilde g \|_{H^s(\mathbb R^2)} \lesssim \| g
\|_{H^s(Q_+)}$.  It is rather easy to construct such an extension by using reflection and
smooth truncation.  Clearly 
\begin{align}
\text{LHS of \eqref{eq2.1}} & \lesssim \int_{y\in Q_+} \frac {|g(y)|^2} { y_2^{2s}} dy \notag \\
&\lesssim \int_{|y_1|<1} \Bigl( \int_{0<y_2<1} |g(y_1,y_2)|^2 dy_2
+ \int_{0<y_2,\tilde y_2<1} \frac { |g(y_1,y_2) -g(y_1,\tilde y_2) |^2}
{ |y_2 -\tilde y_2|^{1+2s} } dy_2 d\tilde y_2 \Bigr) dy_1 \notag \\
&\lesssim \| \tilde g \|^2_{L_{y_1}^2 H^s_{y_2} (\mathbb R^2)}  \lesssim \| \tilde g \|^2_{H^s(\mathbb R^2)}
\lesssim \|g \|_{H^s(Q_+)}^2.
\end{align}
\end{proof}
\begin{lem} \label{lem2.2A}
Let $0<s<1/2$. Suppose $K$ is a nonempty compact set which is  properly contained in 
$Q_+=\{ (y_1, y_2):\, |y_1|<1, 0\le y_2 <f(y_1)\}$ where $f$ is a positive continuous function.
Assume $u$ vanishes on $Q_+\setminus K$. 
  Define $Q_-=\{(y_1,y_2): \, |y_1|<1, -f(y_1)<y_2<0\}$ and
\begin{align}
\tilde u(x_1,x_2)
=\begin{cases}
u(x_1,x_2), \qquad x \in Q_+; \\
-3 u(x_1,-x_2) +4 u(x_1,-\frac {x_2}2), \qquad x \in Q_-, \\
0, \qquad \text{otherwise}.
\end{cases}
\end{align}
Then
\begin{align}
\| \partial \tilde u \|_{H^s (\mathbb R^2)}  \le C_2 \| \partial u \|_{H^s (Q_+)},
\end{align}
where $C_2>0$ depends only on ($s$, $K$, $f$).
\end{lem}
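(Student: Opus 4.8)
The goal is to show that the specific higher-order reflection $\tilde u$, built from the two pieces $-3u(x_1,-x_2)$ and $4u(x_1,-x_2/2)$ on $Q_-$, has its gradient controlled in $H^s(\mathbb R^2)$ by $\|\partial u\|_{H^s(Q_+)}$. The point of the particular coefficients $(-3,4)$ and dilation factors $(1,\tfrac12)$ is that they are chosen so that $\tilde u$ and $\partial_{x_2}\tilde u$ match across $\{x_2=0\}$: indeed $-3u(x_1,0)+4u(x_1,0)=u(x_1,0)$, and differentiating in $x_2$, $-3\cdot(-1)u_{x_2}(x_1,0)+4\cdot(-\tfrac12)u_{x_2}(x_1,0)=u_{x_2}(x_1,0)$, so $\tilde u\in C^1$ across the interface (in the trace sense), which is exactly what is needed to keep $\partial\tilde u$ in $H^s$ for $s<\tfrac12$ without creating a jump. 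Note also that since $u$ vanishes on $Q_+\setminus K$ with $K$ compactly contained, $\tilde u$ vanishes near the curved boundary $\{|x_2|=f(x_1)\}$ and near $|x_1|=1$, so $\tilde u$ is compactly supported in the open set $Q$ and the only place where regularity could fail is the flat interface $\{x_2=0\}$ and (trivially) the edges where the support stays away.

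\textbf{Step 1: Reduce to a half-space reflection problem.} First I would observe that, because $\tilde u$ is supported in a compact subset of $Q$ away from the curved part of $\partial Q_+$, the curved boundary and the function $f$ play no role beyond fixing the constant; one can replace $Q_+$ by the half-plane $\{x_2>0\}$ (multiplying $u$ by a fixed smooth cutoff supported near $K$ changes nothing since $u$ already vanishes outside $K$). So it suffices to prove: if $u\in H^{1+s}$ is supported in $\{x_2\ge 0\}$ inside a fixed compact set, and $\tilde u$ is defined by the stated formula on $\{x_2<0\}$, then $\|\partial\tilde u\|_{H^s(\mathbb R^2)}\lesssim\|\partial u\|_{H^s(\{x_2>0\})}$, with an implicit dependence on the compact support.

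\textbf{Step 2: Reflection is bounded on $H^s$ for $s<\tfrac12$, and scaling is harmless.} The map $u(x_1,x_2)\mapsto u(x_1,-x_2)$ is an isometry of $H^s$ on each half-plane, and $u(x_1,x_2)\mapsto u(x_1,-x_2/2)$ is bounded $H^s(\{x_2>0\})\to H^s(\{x_2<0\})$ with constant depending only on the dilation factor (and on the compact support, to compare $L^2$ norms). Applying these to each component of $\partial u$ shows that the gradient of each of the two pieces defining $\tilde u|_{Q_-}$ lies in $H^s(Q_-)$ with the right bound — note $\partial_{x_1}$ commutes with the reflection/dilation and $\partial_{x_2}$ picks up harmless constants. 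Hence $\partial\tilde u\in H^s(\{x_2>0\})$ and $\partial\tilde u\in H^s(\{x_2<0\})$ separately; the only issue is whether gluing the two halves along $\{x_2=0\}$ preserves membership in $H^s(\mathbb R^2)$.

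\textbf{Step 3: Gluing — this is the main obstacle.} For $0<s<\tfrac12$ it is a standard fact that a function which lies in $H^s$ on each of two complementary half-spaces lies in $H^s$ of the whole space provided its traces from the two sides agree on the common boundary (when $s<\tfrac12$ the trace is not even well-defined pointwise, but the relevant statement is: no distributional jump is created, i.e. the two half-space pieces glue to a function whose weak derivatives have no singular part on $\{x_2=0\}$; equivalently $\chi_{\{x_2>0\}}v_+ + \chi_{\{x_2<0\}}v_-$ is in $H^s$ when $v_\pm\in H^s$ of the respective half-spaces and they have matching boundary values). This is exactly why the coefficients were engineered: one checks that $\tilde u$ has matching trace ($u|_{x_2=0}$ from above equals $(-3+4)u|_{x_2=0}=u|_{x_2=0}$ from below) \emph{and} that $\partial_{x_2}\tilde u$ has matching trace ($u_{x_2}|_{x_2=0}$ from above, $(3-2)u_{x_2}|_{x_2=0}=u_{x_2}|_{x_2=0}$ from below), while $\partial_{x_1}\tilde u$ trivially has matching trace since it is the $x_1$-derivative of a function with matching trace. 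Therefore each component of $\partial\tilde u$ is a gluing of two $H^s$ half-space functions with equal boundary values, hence lies in $H^s(\mathbb R^2)$, with norm bounded by the sum of the one-sided norms, which by Step 2 is $\lesssim\|\partial u\|_{H^s(Q_+)}$. The cleanest way to package the gluing lemma in the present elementary style is via the Gagliardo seminorm directly: split $\int\!\!\int \frac{|w(x)-w(y)|^2}{|x-y|^{2+2s}}$ over the four quadrant-pairs (both points above, both below, and the two mixed terms); the same-side terms are the one-sided seminorms, and for a mixed pair $x_2>0>y_2$ one writes $w(x)-w(y)=(w(x)-w(x_1,0))+((w(x_1,0)-w(y))$ using the common trace value $w(x_1,0)$, bounds each difference by the corresponding one-sided contribution together with a one-dimensional Hardy-type estimate in the $x_2$ (resp. $y_2$) variable of the form already exploited in Lemma \ref{lem_H2} and Lemma \ref{lem_H3}, and integrates out the remaining variable. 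I expect the bookkeeping in this last splitting to be the only genuinely technical part; everything else is isometry or scaling.
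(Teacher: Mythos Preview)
Your Steps 1--2 match the paper's setup: compute $\partial\tilde u$ piecewise, note that reflection/dilation carries $H^s(Q_+)$ to $H^s(Q_-)$, and split the Gagliardo seminorm into two same-side contributions plus a cross term over $Q_+\times Q_-$. The only real work is the cross term, and that is where your Step 3 diverges from the paper and runs into a genuine difficulty.

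Your proposed device is to insert the trace value $w(x_1,0)$ and write $w(x)-w(y)=(w(x)-w(x_1,0))+(w(x_1,0)-w(y))$. After integrating out the point on the other side this produces integrals of the type $\displaystyle\int_{Q_+}\frac{|w(x)-w(x_1,0)|^2}{x_2^{2s}}\,dx$, which you hope to bound by $\|w\|_{H^s(Q_+)}^2$ via Lemmas \ref{lem_H2}--\ref{lem_H3}. But those lemmas control $\int |w|^2/x_2^{2s}$, \emph{not} the trace-subtracted version, and the inequality you need is actually false: take $w(x_1,x_2)=\psi(x_2/\epsilon)$ for a fixed bump $\psi$ with $\psi(0)=1$; then $\|w\|_{H^s(Q_+)}^2\lesssim \epsilon^{1-2s}\to 0$ while the trace-subtracted integral stays bounded below. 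More conceptually, for $s<\tfrac12$ the boundary value $w(x_1,0)$ carries no information about the $H^s$ class of $w$, so it cannot serve as a useful pivot. (Relatedly, your emphasis on the trace-matching of $\partial\tilde u$ is a red herring: for $s<\tfrac12$ gluing two half-space $H^s$ pieces is \emph{always} in $H^s$, with no boundary condition; what the coefficient identity $-3+4=1$ actually buys is that $\tilde u$ itself is continuous, so the distributional gradient has no singular layer on $\{x_2=0\}$ and agrees with the piecewise formula.)

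The paper's pivot is different and avoids the problem. For $x\in Q_+$, $y\in Q_-$ it changes variables to $y'=(y_1,-y_2)\in Q_+$, $y''=(y_1,-y_2/2)\in Q_+$ and uses the algebraic identity (this is where the coefficients enter)
\[
(\partial\tilde u)(x)-(\partial\tilde u)(y)=\bigl[(\partial u)(x)-(\partial u)(y')\bigr]+c\bigl[(\partial u)(y')-(\partial u)(y'')\bigr],
\]
with $c=4$ for $\partial_1$ and $c=-2$ for $\partial_2$. Since $|x-y|\ge|x-y'|$ when $x_2,y'_2>0$, the first bracket is dominated by the $\dot H^s(Q_+)$ seminorm. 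The second bracket depends only on $y'$ and, after integrating out $x$, yields exactly $\int_{Q_+}|g(y')|^2/(y'_2)^{2s}\,dy'$ with $g(y')=(\partial u)(y')-(\partial u)(y'_1,y'_2/2)$; this is precisely the form handled by Lemma \ref{lem2.1A}, giving $\lesssim\|g\|_{H^s(Q_+)}^2\lesssim\|\partial u\|_{H^s(Q_+)}^2$. So the correct intermediate point is the \emph{reflected} point $y'$, not the boundary point $(x_1,0)$.
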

\begin{proof}
Observe that
\begin{align}
\partial_1 \tilde u(x_1,x_2)
=\begin{cases}
\partial_1 u(x_1,x_2), \qquad x \in Q_+; \\
-3  \partial_1 u(x_1,-x_2) +4 \partial_1 u(x_1,-\frac {x_2}2), \qquad x \in Q_-;
\end{cases} \\
\partial_2 \tilde u(x_1,x_2)
=\begin{cases}
\partial_2 u(x_1,x_2), \qquad x \in Q_+; \\
3 (\partial_2 u)(x_1,-x_2) - 2 (\partial_2 u)(x_1,-\frac {x_2}2), \qquad x \in Q_-.
\end{cases}
\end{align}
By Lemma \ref{lem2.1A}, we have
\begin{align}
\| \partial \tilde u \|_{H^s (\mathbb R^2)}^2
& \lesssim \| \partial u \|_{H^s(Q_+)}^2
+ \| \partial \tilde u \|^2_{H^s(Q_-)}
+ \int_{x \in Q_+, y \in Q_-} 
\frac { |(\partial \tilde u)(x) - (\partial \tilde u)(y) |^2}
{|x-y|^{2+2s}} dx dy \notag \\
& \lesssim
\| \partial u \|_{H^s(Q_+)}^2 
+ \int_{x\in Q_+, y\in Q_+}
\frac { |(\partial u )(y_1, y_2) - (\partial u )(y_1, \frac {y_2}2) |^2}
{(|x_1-y_1|+|x_2+y_2|)^{2+2s}} dy dx  \notag \\
& \lesssim \| \partial u \|_{H^s(Q_+)}^2.
\end{align}
\end{proof}

\begin{lem} \label{Lem2.3A}
Let $0<s<\frac 12$ and $r_0>0$. Suppose $\gamma: \, \mathbb R \to \mathbb R$ is a 
$C^2$-function such that $\gamma(0)=0$, and  the region 
$\Omega=\{(x_1,x_2):\, |x|<r_0, x_2> \gamma(x_1) \} $ can be exactly 
prescribed by $\Omega= \{(x_1, x_2):
x_-<x_1<x_+,   \gamma(x_1)<x_2<\sqrt{r_0^2-x_1^2} \}$ for some $-r_0<x_-<x_+<r_0$. 
Assume $u$ is compactly supported in  $F_0= \{(x_1, x_2):
x_-+\delta_0<x_1<x_+-\delta_0,   \gamma(x_1)\le x_2<\sqrt{r_0^2-x_1^2} -\delta_0\}$ for some small $\delta_0>0$. Then there exists
an extension $\tilde u$ of $u$ which is compactly supported in $B(0,r_0)$, such that
\begin{align}
\| \tilde u \|_{L^2(\mathbb R^2)} 
+ \| \partial \tilde u \|_{H^s(\mathbb R^2)}
\le C_3  \cdot (\| u \|_{L^2(\Omega)} + \| \partial u \|_{H^s(\Omega)} ),
\end{align}
where $C_3>0$ depends on ($\delta_0$, $\gamma$, $s$).
\end{lem}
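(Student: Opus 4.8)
The plan is to straighten the curved part $\{x_2=\gamma(x_1)\}$ of $\partial\Omega$ by the $C^2$ shear $\Phi(x_1,x_2)=(x_1,x_2-\gamma(x_1))$, apply the reflection extension of Lemma \ref{lem2.2A} across the resulting flat wall, transport the extended function back by $\Phi^{-1}$, and multiply by a cutoff to keep the support inside $B(0,r_0)$.

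First I set $v=u\circ\Phi^{-1}$, where $\Phi^{-1}(y_1,y_2)=(y_1,y_2+\gamma(y_1))$. The map $\Phi$ carries $\Omega$ onto $\tilde\Omega=\{x_-<y_1<x_+,\ 0<y_2<\sqrt{r_0^2-y_1^2}-\gamma(y_1)\}$, and after the affine rescaling $y_1\mapsto\frac{2y_1-(x_-+x_+)}{x_+-x_-}$ (keeping $y_2$ fixed) the region $\tilde\Omega$ has exactly the shape $Q_+=\{|z_1|<1,\ 0\le z_2<f(z_1)\}$ of Lemma \ref{lem2.2A}, with $f$ positive and continuous on $(-1,1)$. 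The hypothesis that $u$ is compactly supported in $F_0$ — i.e.\ $\supp u$ stays a distance $\gtrsim\delta_0$ from $\{x_1=x_\pm\}$ and from the arc $\{x_2=\sqrt{r_0^2-x_1^2}\}$, while it may reach the graph $\{x_2=\gamma(x_1)\}$ — turns, under $\Phi$ and the rescaling, into precisely the hypothesis of Lemma \ref{lem2.2A}: $v$ vanishes on $Q_+\setminus K$ for a compact $K$ whose reflected double lies in the interior of $Q$.

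The one genuine point is that $\Phi$ and $\Phi^{-1}$ preserve the quantity $\|\cdot\|_{L^2}+\|\partial(\cdot)\|_{H^s}$ over the relevant bounded regions, up to constants depending only on $\delta_0,\gamma,s$. The $L^2$ part is the unit-Jacobian change of variables. For the fractional part, the chain rule writes $\partial v$ as $A\cdot\bigl((\partial u)\circ\Phi^{-1}\bigr)$ with $A$ a matrix whose entries are $1$ or $\gamma'\in C^1$; since $\Phi^{-1}$ is bi-Lipschitz on a bounded set, the Gagliardo seminorm of $(\partial u)\circ\Phi^{-1}$ is comparable to that of $\partial u$ (here $0<s<1$ is used, so that $|x-y|^{2+2s}$ transforms up to constants), and multiplication by a $C^1$ function is bounded on $H^s$ for $0<s<1$ (decompose $\phi(x)g(x)-\phi(y)g(y)=\phi(x)(g(x)-g(y))+(\phi(x)-\phi(y))g(y)$ and use $|\phi(x)-\phi(y)|\lesssim|x-y|$ together with $\int_{|x-y|<1}|x-y|^{-2s}\,dx<\infty$). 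Hence $\|v\|_{L^2}+\|\partial v\|_{H^s(Q_+)}\lesssim\|u\|_{L^2(\Omega)}+\|\partial u\|_{H^s(\Omega)}$, and the same argument applies with $\Phi$ in place of $\Phi^{-1}$. Lemma \ref{lem2.2A}, together with the obvious $L^2$ bound read off from its explicit formula for $\tilde u$, then gives an extension $\bar v\in H^s(\mathbb R^2)$ of $v$, compactly supported, with $\|\bar v\|_{L^2}+\|\partial\bar v\|_{H^s}\lesssim\|v\|_{L^2}+\|\partial v\|_{H^s}$.

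Finally, set $\hat u=\bar v\circ\Phi$ (with $\Phi$ extended as the global shear composed with the affine rescaling): since $\hat u$ and $\bar v$ are compactly supported, the same change-of-variables estimate shows $\hat u$ is an $H^s$ extension of $u$ to $\mathbb R^2$ with $\|\hat u\|_{L^2}+\|\partial\hat u\|_{H^s}\lesssim\|\bar v\|_{L^2}+\|\partial\bar v\|_{H^s}$. Since $\supp u\subseteq\overline{F_0}$, which the $\delta_0$-margin keeps a compact subset of $B(0,r_0)$, I pick $\chi\in C_c^{\infty}(B(0,r_0))$ with $\chi\equiv1$ on a neighborhood of $\overline{F_0}$ and put $\tilde u=\chi\hat u$. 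Then $\tilde u=u$ on $\Omega$ (because $\chi\equiv1$ wherever $u\ne0$), $\tilde u$ is compactly supported in $B(0,r_0)$, and multiplying by $\chi\in C_c^{\infty}$ costs only a constant in $\|\cdot\|_{L^2}+\|\partial(\cdot)\|_{H^s}$ (product rule plus the $C^1$-multiplier bound). Chaining the three estimates yields the claim with $C_3=C_3(\delta_0,\gamma,s)$. The main obstacle is exactly this change-of-variables invariance of the fractional seminorm under the $C^2$ flattening, where the condition $0<s<1$ (hence a fortiori our $s<\tfrac12$, inherited from Lemma \ref{lem2.2A}) is what keeps the local integrals finite.
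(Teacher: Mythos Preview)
Your proof is correct and follows essentially the same approach as the paper: flatten the boundary via the shear $(x_1,x_2)\mapsto(x_1,x_2-\gamma(x_1))$, apply Lemma~\ref{lem2.2A} on the straightened domain, and transport back. You are in fact more explicit than the paper in two places --- the affine rescaling in $y_1$ and the final cutoff to force support in $B(0,r_0)$ --- which the paper absorbs into the phrases ``by a minor adjustment of constants'' and ``compactly supported in $B(0,r_0)$''.
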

\begin{figure}[!h]
\centering
\includegraphics[width=0.4\textwidth]{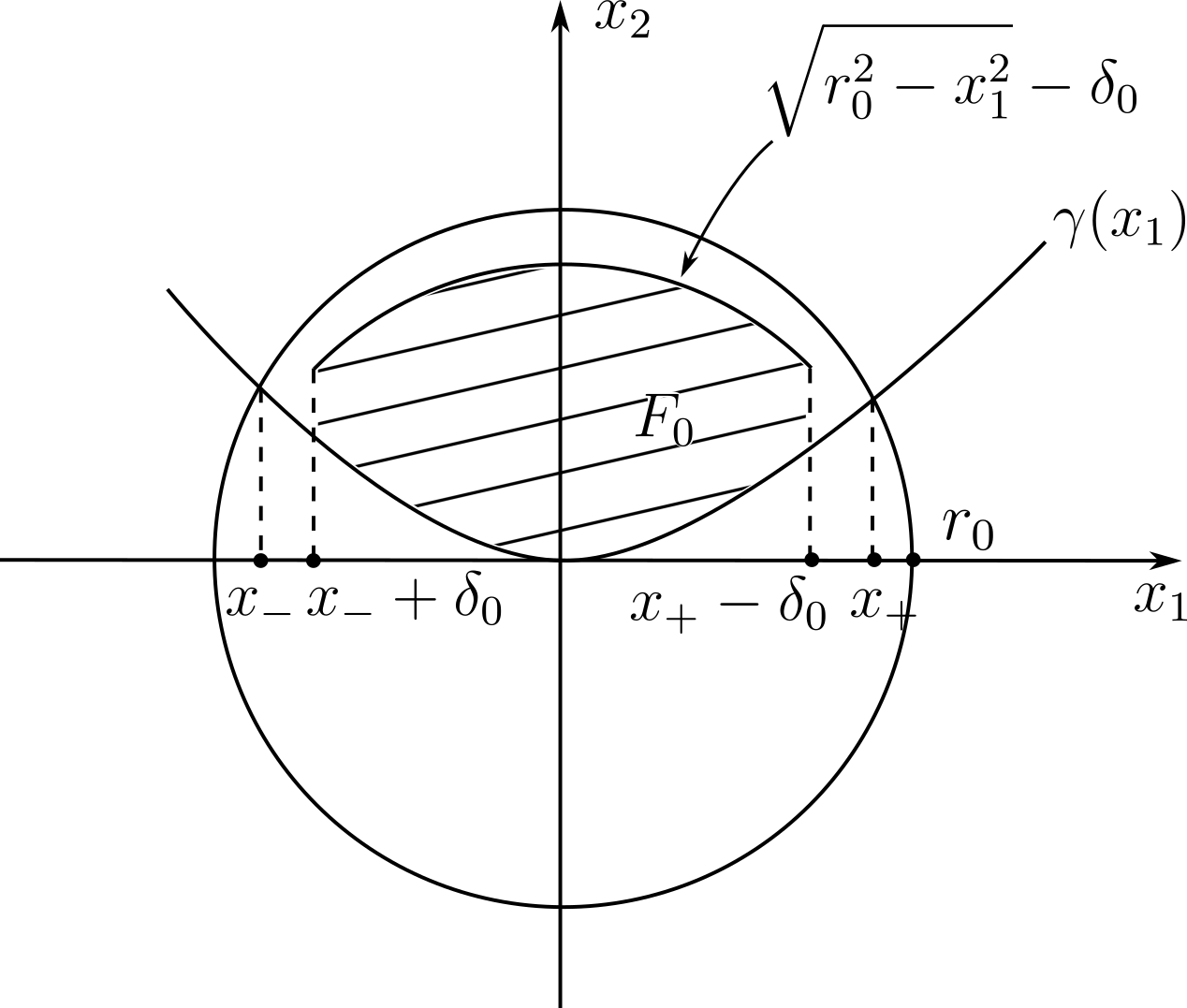}
\caption{The set $F_0$ and the curve $\gamma$ in Lemma 3.3 }\label{fig:intro1}
\end{figure}
\begin{proof}
Define the usual boundary straightening map by
$y = \psi (x)$ by $y_1=x_1$, $y_2=x_2-\gamma(x_1)$. The inverse map
is $x=\phi(y): \, x_1=y_1, x_2=y_2+\gamma(y_1)$. Define $v(y)= u(\phi(y))$,
for $y \in W= \psi(\Omega)$. 
Note that $(\partial v )(y) = (\partial u)(\phi(y)) (\partial \phi)(y)$. 
Since $\gamma \in C^2$, it is not 
difficult to check that
\begin{align}
\int_{y, \tilde y \in W}
\frac{ |(\partial v)(y) - (\partial v)(\tilde y) |^2}{|y-\tilde y|^{2+2s}}
dy d\tilde y
\lesssim \int_{x,\tilde x \in \Omega}
\frac{ |(\partial u)(x)- (\partial u)(\tilde x)|^2}
{|x-\tilde x|^{2+2s}} dx d\tilde x
+ \| \partial u \|_{L^2(\Omega)}^2,
\end{align}
where the second term on the RHS arises from the difference $(\partial \phi)(y) -(\partial \phi)(\tilde y)$. Denote $\Omega_0= \{(x_1, x_2):
x_-<x_1<x_+,   \gamma(x_1)\le x_2<\sqrt{r_0^2-x_1^2} \}$, 
$W_0=\psi(\Omega_0)$, and $K_1=\psi(F_0)$.
Note that $K_1$ is properly contained in $W_0$. By a minor adjustment of constants,
we can then apply Lemma \ref{lem2.2A} to obtain an extension $v$ to the whole $\mathbb R^2$. 
The map $\psi$ then provides the needed extension of $u$ which is compactly supported in
$B(0,r_0)$.  
\end{proof}

\begin{thm}[Extension of $H^{\frac 32-}$ functions] \label{thm3.1A}
Let $d\ge 1$ and $0<s<\frac 12$. Assume $\Omega \subset \mathbb R^d$ is a bounded domain with $C^2$ boundary.  Select a bounded open set $V$ such that $\Omega \subset \subset V$.  Then
there exists a bounded linear operator
\begin{align}
E: \, H^{1+s}(\Omega) \to H^{1+s} (\mathbb R^d)
\end{align}
such that for each $ f \in H^{1+s}(\mathbb R^d)$, we have
\begin{enumerate}
\item $Ef = f $ a.e. in $\Omega$;
\item $Ef$ has support within $V$;
\item Denote $\tilde f =Ef$, then 
\begin{align}
& \| \tilde f \|_{H^1(\mathbb R^d)} \lesssim \| f \|_{H^1(\Omega)}; \\
&\| \partial \tilde f \|_{ H^s(\mathbb R^d) }
\lesssim \| \partial f \|_{H^s( \Omega)}
= \| \partial f \|_{\dot H^s(\Omega)} + \| \partial f \|_{L^2(\Omega)}.
\end{align}
\end{enumerate}
\end{thm}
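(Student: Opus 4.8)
The plan is to localize with a finite partition of unity and reduce the construction of $E$ to the local boundary extension of Lemma~\ref{Lem2.3A} --- one chart per boundary piece --- together with a zero extension of an interior piece; since $\partial\Omega$ is compact and $C^2$, only finitely many charts are needed. Concretely, I would first choose boundary points $p_1,\dots,p_N\in\partial\Omega$ and radii $0<r_j<r_j'$ so that the balls $B(p_j,r_j)$ cover $\partial\Omega$ and, for each $j$, after a rigid motion sending $p_j$ to $0$ with inner normal $e_d$, the set $\Omega\cap B(0,r_j')$ is \emph{exactly} of the graph-over-a-ball form required in Lemma~\ref{Lem2.3A} (the region above a $C^2$ graph $\gamma_j$ over the first $d-1$ coordinates, capped by the sphere of radius $r_j'$); this is possible from the $C^2$ graph representation of $\partial\Omega$ on small balls. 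Shrinking $r_j$ relative to $r_j'$ if necessary, I would also arrange that the extension produced by Lemma~\ref{Lem2.3A} on $B(0,r_j')$, transported back to the original coordinates, is supported in $V$ (legitimate since $\Omega\subset\subset V$ and the straightening map only mildly distorts small balls). Finally I would pick an open $B_0$ with $\overline{B_0}\subset\Omega$ and $B_0\cup\bigcup_jB(p_j,r_j)\supset\overline\Omega$, a smooth partition of unity $\chi_0,\dots,\chi_N$ subordinate to this cover with $\sum_j\chi_j\equiv1$ near $\overline\Omega$, and --- shrinking the $r_j$ once more --- would ensure that $\supp\chi_j\cap\overline\Omega$, read in the $j$-th chart coordinates, is compactly contained in the shrunk set $F_0$ of Lemma~\ref{Lem2.3A}.

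Next I would note that Lemmas~\ref{lem2.1A}, \ref{lem2.2A} and \ref{Lem2.3A} hold verbatim in $\R^d$, with the hyperplane $\{y_d=0\}$ and transverse variable $y'=(y_1,\dots,y_{d-1})$ replacing $\{y_2=0\}$ and $y_1$: the proofs use only the one-dimensional fractional Hardy estimate (Lemma~\ref{lem_H2}, applied in $y_d$ after integrating out $y'$) and the reflection $\tilde u(y',y_d)=-3u(y',-y_d)+4u(y',-\tfrac{y_d}{2})$, which matches $u$ and its normal derivative across $\{y_d=0\}$. Then, for $f\in H^{1+s}(\Omega)$ and $j\ge1$, pulling $\chi_jf$ back by the $j$-th chart yields a function compactly supported in $F_0$ whose $H^{1+s}$ norm is comparable to $\|\chi_jf\|_{H^{1+s}(\Omega)}$ (the chart is $C^2$, exactly as in the proof of Lemma~\ref{Lem2.3A}), so Lemma~\ref{Lem2.3A} produces $w_j\in H^{1+s}(\R^d)$, supported in $V$, with
\begin{align}
\|w_j\|_{L^2(\R^d)}+\|\partial w_j\|_{H^s(\R^d)}\lesssim\|\chi_jf\|_{L^2(\Omega)}+\|\partial(\chi_jf)\|_{H^s(\Omega)}.
\end{align}
Writing $\partial(\chi_jf)=\chi_j\,\partial f+f\,\partial\chi_j$ and using the Leibniz bound $\|\chi g\|_{\dot H^s(\Omega)}\lesssim\|\chi\|_\infty\|g\|_{\dot H^s(\Omega)}+\big(\|\chi\|_\infty+\|\partial\chi\|_\infty\big)\|g\|_{L^2(\Omega)}$ --- which follows from $|\chi(x)-\chi(y)|\lesssim\min(1,|x-y|)$ and convergence of $\int\min(1,|z|)^2|z|^{-d-2s}\,dz$ (here $s<1$) --- the right-hand side is $\lesssim\|f\|_{H^{1+s}(\Omega)}$. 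For $j=0$ the function $\chi_0f$ is compactly supported in $\Omega$, so its zero extension $w_0$ lies in $H^{1+s}(\R^d)$, is supported in $\overline{B_0}\subset V$, and since $\dist(\supp\chi_0,\Omega^c)>0$ the cross terms $\int_\Omega\int_{\Omega^c}\big(|\chi_0f(x)|^2+|\partial(\chi_0f)(x)|^2\big)|x-y|^{-d-2s}\,dy\,dx$ are controlled, giving $\|w_0\|_{H^{1+s}(\R^d)}\lesssim\|f\|_{H^{1+s}(\Omega)}$.

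Finally I would set $Ef:=w_0+\sum_{j=1}^Nw_j$. Each $w_j$ depends linearly and boundedly on $f$, so $E:H^{1+s}(\Omega)\to H^{1+s}(\R^d)$ is bounded linear; every $w_j$ is supported in $V$, giving (2); and restricting to $\Omega$, where $w_j=\chi_jf$ for each $j$, gives $Ef=\big(\sum_j\chi_j\big)f=f$ a.e., giving (1). Summing the local bounds over the finitely many $j$ yields the estimates in (3), the clean separation $\|\partial Ef\|_{H^s(\R^d)}\lesssim\|\partial f\|_{H^s(\Omega)}$ following by treating the $f\,\partial\chi_j$ terms as lower order and absorbing them as in Section~2, or simply by first proving the bound with $\|f\|_{H^{1+s}(\Omega)}$ on the right and then using $\|f\|_{L^2(\Omega)}+\|\partial f\|_{L^2(\Omega)}\lesssim\|f\|_{H^1(\Omega)}$. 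The analytic content is already contained in Lemmas~\ref{lem2.1A}--\ref{Lem2.3A}; I expect the main obstacle here to be the geometric bookkeeping --- arranging the charts so that $\Omega$ has \emph{precisely} the graph-over-a-ball form Lemma~\ref{Lem2.3A} demands, matching the partition-of-unity supports to the shrunk sets $F_0$, and checking that the boundary-straightening maps (which distort balls) do not push $\supp Ef$ outside $V$ --- together with the verification that the Leibniz step does not mix the $\dot H^s$ and $L^2$ parts of the norm, which is exactly where $s<1/2$ (in fact $s<1$) is used.
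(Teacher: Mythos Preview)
Your proposal is correct and follows essentially the same route as the paper: partition of unity, trivial extension of the interior piece, and Lemma~\ref{Lem2.3A} (after a rigid motion) for each boundary piece. The paper's proof is a three-line sketch (``standard partition of unity $+$ Lemma~\ref{Lem2.3A}'') whereas you spell out the geometric bookkeeping, the Leibniz bound for $\chi_j f$, and the remark that Lemmas~\ref{lem2.1A}--\ref{Lem2.3A} carry over verbatim to $\R^d$; but the skeleton is identical.
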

\begin{proof}
With no loss we consider the two dimensional case.    The argument then follows from 
a standard partition of unity.  The extension for the interior piece is quite straightforward.
The localized boundary piece follows from (after rotation and relabelling coordinate
axes if necessary)  Lemma \ref{Lem2.3A}.
\end{proof}

\begin{cor}[Boundedness of $T_2$ on a finite domain] \label{cor3.1A}
Let $d\ge 1$ and $0<s<\frac 12$. Assume $\Omega \subset \mathbb R^d$ is a bounded domain with $C^2$
boundary. Then
\begin{align}
&\| \partial f 1_{f>0} \|_{ H^s(\Omega)} \lesssim \|\partial f \|_{H^s(\Omega)}
=
\| \partial f \|_{\dot H^s(\Omega)} + \| \partial f \|_{L^2(\Omega)}; \\
&\| T_2 f\|_{H^{1+s}(\Omega)} \lesssim \| f \|_{H^{1+s}(\Omega)}.
\end{align}
\end{cor}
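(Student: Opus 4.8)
The plan is to reduce the finite-domain statement to the whole-space statement via the extension operator of Theorem \ref{thm3.1A}, just as Corollary \ref{cor2.0A} reduces multi-dimensional $\mathbb R^d$ to one dimension. First I would fix $f \in H^{1+s}(\Omega)$ and apply Theorem \ref{thm3.1A} to obtain $\tilde f = Ef \in H^{1+s}(\mathbb R^d)$ with $\tilde f = f$ a.e. on $\Omega$, compact support in $V$, and $\| \tilde f\|_{H^1(\mathbb R^d)} \lesssim \| f\|_{H^1(\Omega)}$, $\| \partial \tilde f\|_{H^s(\mathbb R^d)} \lesssim \| \partial f\|_{H^s(\Omega)}$. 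By Corollary \ref{cor2.0A} (applied to $\tilde f$ on $\mathbb R^d$), we have $\| \partial(\tilde f\,)\,1_{\tilde f>0}\|_{H^s(\mathbb R^d)} \lesssim \| \partial \tilde f\|_{H^s(\mathbb R^d)} \lesssim \| \partial f\|_{H^s(\Omega)}$, and likewise $\| T_2 \tilde f\|_{H^{1+s}(\mathbb R^d)} \lesssim \| f\|_{H^{1+s}(\Omega)}$.

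The remaining point is to pass from the global norm back to the norm on $\Omega$. Here I would use that the $H^s(\Omega)$ seminorm is obtained by restricting the integration domain: since $\tilde f = f$ a.e. on $\Omega$, on that set $\{\tilde f > 0\} \cap \Omega = \{f>0\}$ (up to null sets) and $\partial \tilde f = \partial f$ a.e.\ on $\Omega$, so $(\partial f)\,1_{f>0}$ is the restriction to $\Omega$ of $(\partial \tilde f)\,1_{\tilde f>0}$. The integral defining $\| \cdot\|_{\dot H^s(\Omega)}$ is taken over $x,y \in \Omega$, which is a subdomain of $x,y\in\mathbb R^d$, hence $\| \partial f\,1_{f>0}\|_{\dot H^s(\Omega)} \le \| \partial(\tilde f\,)\,1_{\tilde f>0}\|_{\dot H^s(\mathbb R^d)}$; the same monotonicity applies to the $L^2$ piece, giving the first inequality. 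For the second inequality, $T_2 f = (T_2 \tilde f\,)|_\Omega$ a.e., and again restricting the defining integrals of $\| \cdot\|_{H^1(\Omega)}$ and $\| \partial(\cdot)\|_{\dot H^s(\Omega)}$ to the smaller domain yields $\| T_2 f\|_{H^{1+s}(\Omega)} \le \| T_2 \tilde f\|_{H^{1+s}(\mathbb R^d)} \lesssim \| f\|_{H^{1+s}(\Omega)}$.

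I do not anticipate a genuine obstacle: the only subtlety is bookkeeping of the almost-everywhere identifications $\{\tilde f>0\}\cap\Omega = \{f>0\}$ and $\partial\tilde f = \partial f$ on $\Omega$ (valid since $\tilde f - f = 0$ in $H^{1+s}(\Omega)$, so the difference and its gradient vanish a.e.), together with the trivial monotonicity of Gagliardo-type double integrals under shrinking the domain of integration. One should also record that the analogous bounds for $T_1$ and $T_3$ follow immediately, e.g. from $T_1 f = T_2 f + T_3 f$ and $T_3 f = T_2(-f)$, so that the full statement of Theorem \ref{thmMa} in the range $1<s<3/2$ is a consequence.
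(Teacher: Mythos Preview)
Your proposal is correct and follows exactly the paper's approach: extend $f$ to $\tilde f$ on $\mathbb R^d$ via Theorem~\ref{thm3.1A}, invoke the whole-space boundedness (Corollary~\ref{cor2.0A}), and restrict back to $\Omega$. You have simply spelled out the monotonicity and a.e.\ identification details that the paper leaves implicit, and your closing remark about $T_1,T_3$ matches the paper's final sentence.
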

\begin{proof}
By Theorem \ref{thm3.1A}, we extend $f$ to $\tilde f$ defined on the whole $\mathbb R^d$.
The result then follows from the boundedness in the whole space case (see Corollary \ref{cor2.0A}).
\end{proof}
Finally we remark that Theorem \ref{thmMa} follows from Corollary \ref{cor3.1A} since the proofs
for $T_1$ and $T_3$ are similar.

\bibliographystyle{abbrv}

\begin{thebibliography}{}

\bibitem{BS11}
Bourdaud, G., Sickel, W. Composition operators on function spaces with fractional order of smoothness. In: Ozawa, T., Sugimoto, M. (eds.) Harmonic Analysis and Nonlinear Partial 
Differential Equations, pp. 93--132 RIMS Kokyuroku Bessatsu, B26, Res. Inst. Math. Sci. 
(RIMS), Kyoto (2011)


\bibitem{BD11}
Bogdan, K. and Dyda, B.
The best constant in a fractional Hardy inequality. 
Math. Nachr. 284 (2011), no. 5-6, 629--638.

\bibitem{Dyda04}
Dyda, B. A fractional order Hardy inequality.
Illinois J. of Math., 48, 575--588 (2004)


\bibitem{LM72}
Lions J.-L., and Magenes E. 
Non Homogeneous boundary value problems and applications I.
Springer, Berlin (1972)




\bibitem{LRNew}
Li, D. and  Rodrigo, J.
Remarks on a nonlocal transport. 
Adv. Math. 374 (2020), 107345, 27 pp.

\bibitem{Mey91}
Bourdaud, G., Meyer, Y. Fonctions qui op\'erent sur les espaces de Sobolev. J. Funct. Anal. 97(2), 
351--360 (1991).


\bibitem{MN17}
Musina, R., Nazarov, A.I. A note on truncations in fractional Sobolev spaces. Bull. Math. Sci. (2017). https://doi.org/10.1007/s13373-017-0107-8



\bibitem{Os92}
Oswald, P. On the boundedness of the mapping $f\to |f|$ in Besov spaces. Comment. Math. Univ. Carolin. 33(1), 57--66 (1992)


\bibitem{S95}
Savar\'e, G. On the regularity of the positive part of functions.
Non. Ana., Theo. Meth. \& Appl. Vol. 27, No. 9, pp, 1055--1074 (1996)



\end{thebibliography}


\end{document}